\newcommand\norm[1]{\|#1\|}
\newcommand\abs[1]{\lvert#1\rvert}
\newcommand{\tforall}{\text{ for all }}
\newcommand{\divg}[1]{\nabla \cdot #1}
\newcommand{\pp}[1]{\sigma#1}
\theoremstyle{definition}
\newtheorem{theorem}{Theorem}[section]
\newtheorem{proposition}{Proposition}[section]
\newtheorem{lemma}[theorem]{Lemma}
\newtheorem{example}[theorem]{Example}
\theoremstyle{remark}
\newtheorem*{remark}{Remark}
\providecommand{\keywords}[1]{{\small{\bf{Keywords~}} #1}}
\title{Computational multiscale methods for first-order wave equation using mixed CEM-GMsFEM}
\author{Eric Chung\thanks{Department of Mathematics, The Chinese University of Hong Kong, Shatin, Hong Kong}, \and Sai-Mang Pun\thanks{Department of Mathematics, The Chinese University of Hong Kong, Shatin, Hong Kong}}
\date{ }
\begin{document}
\maketitle
%% -- ---------------------------------------------------------------------
%% -- ---------------------------------------------------------------------
%% -- ---------------------------------------------------------------------
%% -- ---------------------------------------------------------------------
\begin{abstract}
In this paper, we consider a pressure-velocity formulation of the heterogeneous wave equation and employ the constraint energy minimizing generalized multiscale finite element method (CEM-GMsFEM) to solve this problem. The proposed method provides a flexible framework to construct crucial multiscale basis functions for approximating the pressure and velocity. These basis functions are constructed by solving a class of local auxiliary optimization problems over the eigenspaces that contain local information on the heterogeneity. 
Techniques of oversampling are adapted to enhance the computational performance. 
The first-order convergence of the proposed method is proved and illustrated by several numerical tests.
\end{abstract}

\keywords{wave propagation, mixed formulation, GMsFEM, constraint energy minimization. }

%\tableofcontents

\section{Introduction}\label{sec:intro}
%In this work, we consider the wave equation in mixed formulation. 
Wave propagation and its numerical simulations have been widely studied for years due to its fundamental importance in engineering applications. 
For example, these problems arise in the study of seismic wave propagation from geoscience \cite{sato2012seismic}. 
In such applications, the background materials in the domain are often highly heterogeneous, and their elastic properties may vary with the depth rapidly. 
In those cases with non-smooth heterogeneous coefficients, direct simulation using standard numerical methods (e.g. finite element method \cite{grote2009optimal}) may lead to prohibitively expensive computational cost to resolve the heterogeneous structure of the media. However, traditional methods capture fine-scale features with moderately high computational resources \cite{delprat2008wave}. 
Therefore, it is necessary to apply model reduction techniques to alleviate the computational burden in the accurate simulations of wave propagation. 

%\textcolor{red}{(Multiscale methodology for wave equation goes here ...)}

Many model reduction techniques have been well developed in the existing literature. For example, in numerical upscaling methods \cite{gibson2014multiscale,owhadi2008numerical,vdovina2009two,vdovina2005operator}, one typically derives some {\it upscaling} media and solves the resulting upscaled problem globally on a coarse grid. The dimensions of the corresponding linear systems are much smaller, giving a guaranteed saving of computational cost. 
In addition, various multiscale methods \cite{abdulle2017multiscale,engquist2009multi,engquist2012multiscale} for simulating wave propagation are presented in the literature. For instance, multiscale finite element methods (MsFEM) \cite{jiang2012priori,jiang2010analysis} and the heterogeneous multiscale methods (HMM) \cite{abdulle2012heterogeneous,abdulle2011finite,abdulle2014finite} are proposed to discretize the wave equation in a coarse grid. Recently, a class of generalized finite element methods for the wave equation \cite{abdulle2017localized, maier2019explicit} has been proposed. This type of methods is based on the idea of localized orthogonal decomposition (LOD) \cite{Peterseim2014} and generalize the traditional finite element method to accurately resolve the multiscale problems with a cheaper cost. 

%\textcolor{red}{(Generalized multiscale finite element method goes here ...)}

In this research, we focus on the recently developed generalized multiscale finite element method (GMsFEM) \cite{chung2016adaptive,efendiev2013generalized}. The GMsFEM is a generalization of the classical MsFEM \cite{efendiev2009multiscale} in the sense that multiple basis functions can be systematically constructed for each coarse block. The GMsFEM consists of two stages: the offline and online stages. In the offline stage, a set of (local supported) snapshot functions are constructed, which can be used to essentially capture all fine-scale features of the solution. Then, a model reduction is performed by the use of a well-designed local spectral decomposition, and the dominant modes are chosen to be the multiscale basis functions. All these computations are done before the actual simulations of the model. In the online stage, with a given source term and boundary conditions, the multiscale basis functions obtained in the offline stage are used to approximate the solution. There are some previous works using GMsFEM for the wave equation based on the second-order formulation of wave equation \cite{chung2014generalized, gao2015generalized} and the wave equation in mixed formulation \cite{chung2016mixed}. 

%\textcolor{red}{(The objective of this paper)}
The objective of this work is to develop for the first-order wave equation \cite{glowinski2004solution} a new computational multiscale method based on the idea of constraint energy minimization (CEM) proposed in \cite{chung2018cemmixed}. In order to derive an energy-conserving numerical scheme for the wave equation, we consider a pressure-velocity formulation. For spatial discretization, we adopt the idea of CEM-GMsFEM presented in \cite{chung2018constraint,chung2018cemmixed} and propose a multiscale method for heterogeneous wave propagation and construct multiscale spaces for both, the velocity and the pressure variables. In this research, we show the first-order convergence of the method using CEM-GMsFEM combined with the leapfrog scheme. Numerical results are provided to demonstrate the efficiency of the proposed method. The present CEM-GMsFEM setting allows flexibly adding additional basis functions based on spectral properties of the differential operators. This enhances the accuracy of the method in the presence of high contrast in the media. It is shown that if enough basis functions are selected, the convergence of the method can be shown independently of the contrast. Unfortunately, a high number of basis functions directly influences the computational complexity of the method. The direct influence of the contrast on the needed number of basis functions is not known, but numerical results indicate that a moderate number of basis functions, depending logarithmically on the contrast, seems sufficient.

The remainder of the paper is organized as follows. We provide in Section \ref{sec:prelim} the background knowledge of the problem. Next, we introduce the multiscale method and the discretization in Section \ref{sec:method}. In Section \ref{sec:analysis}, we provide the stability estimate of the method and prove the convergence of the proposed method. 
We present the numerical results in Section \ref{sec:numerics}. Finally, we give some concluding remarks in Section \ref{sec:conclusion}. 
\section{Preliminaries}\label{sec:prelim}
Consider the wave equation in mixed formulation over the (bounded) computational domain $\Omega \subset \mathbb{R}^2$
\begin{eqnarray}
	\begin{split}
	\kappa^{-1} \dot{v} + \nabla p  = & 0  &\quad \text{in } \Omega \times (0,T], \\ %\label{eqn:model_1}\\
	\rho \dot{p} + \divg{v}  = & f  &\quad \text{in } \Omega \times (0,T], \\
	v \cdot \mathbf{n}  = & 0 &\quad \text{on } \partial \Omega \times [0,T], \\
	v|_{t=0}  = & h_v &\quad \text{in } \Omega, \\
	p|_{t=0}  = & h_p &\quad \text{in } \Omega. %\label{eqn:model_5}
	\end{split}
	\label{eqn:model}
\end{eqnarray}
Here, $\dot{v}$ and $\dot{p}$ represent the time derivatives of $v$ and $p$ respectively, $T>0$ is a given terminal time, $\rho \in L^\infty (\Omega)$ is the (positive) density of the fluid satisfying $0< \rho_{\min} \leq \rho$, and $\mathbf{n}$ is the unit outward normal vector to the boundary $\partial \Omega$. We assume that the permeability field $\kappa: \Omega \to \mathbb{R}$ is highly oscillatory, satisfying $0 < \kappa_{\min} \leq \kappa(x) \leq \kappa_{\max}$ for almost every $x \in \Omega$ with $\frac{\kappa_{\max}}{\kappa_{\min}} \gg 1$.  
The source function satisfies $f \in L^2(\Omega)$. Here, $h_v$ and $h_p$ are some given initial conditions. In general, we refer to the solution $v$ as velocity and $p$ as pressure.  We denote 
$$V_0:= \{ v \in H(\text{div}, \Omega) : v \cdot \mathbf{n} = 0 ~ \text{on } \partial \Omega \} \quad \text{and} \quad Q := L^2(\Omega).$$ 

Instead of the original PDE formulation, we consider the variational formulation corresponding to \eqref{eqn:model}: find $v \in V_0$ and $p\in Q$ such that 
\begin{eqnarray}
	a(\dot{v},w) - b(w,p) = & 0 & \quad \forall w \in V_0, \\
	(\dot{p},q)_{\rho} + b(v,q)  = & (f,q) &\quad \forall q \in Q,
\end{eqnarray}

where $(\cdot,\cdot)$ denotes the inner product in $L^2(\Omega)$ and $(\cdot,\cdot)_{\rho}$ denotes the inner product in $L^2(\Omega)$ with weighted function $\rho$. The bilinear forms $a:V_0 \times V_0 \to \mathbb{R} $ and $b: V_0 \times Q \to \mathbb{R}$ are defined as follows: 
$$ a(v,w) := \int_\Omega \kappa^{-1} v \cdot w ~dx, \quad b(v,p) := \int_\Omega p~\divg{v} ~dx, $$
for all $v, w \in V_0$, and $p\in Q$. 
%We remark that the condition $\int_\Omega p ~dx = 0$ should be imposed for the pressure variable $p \in Q$ in order to make the variational formulation well-posed. 
We remark that the following inf-sup condition should satisfy: for all $q \in Q$ with $\int_\Omega q ~dx = 0$, there exists a constant $C_0 >0$ independent to $\kappa$ such that 
$$\norm{q}_{L^2(\Omega)} \leq C_0 \sup_{v \in V_0} \frac{b(v,q)}{\norm{v}_{H(\text{div}, \Omega)}}.$$

In this research, we will apply the constraint energy minimizing generalized multiscale finite element method (CEM-GMsFEM) for mixed formulation, which is originally proposed in \cite{chung2018cemmixed}, to approximate the solution of the above mixed problem. First, we introduce fine and coarse grids for the computational domain. Let $\mathcal{T}^H = \{ K_i \}_{i=1}^N$ be a conforming partition of the domain $\Omega$ with mesh size $H>0$ defined by
$$H := \max_{K \in \mathcal{T}^H} \Big(\max_{x, y \in K} \abs{x-y}\Big).$$ 
We refer to this partition as the coarse grid. We denote the total number of coarse elements as $N \in \mathbb{N}^+$. Subordinate to the coarse grid, we define the fine grid partition $\mathcal{T}^h$ (with mesh size $h \ll H$) by refining each coarse element $K \in \mathcal{T}^H$ into a connected union of finer elements. We assume that the refinement above is performed such that $\mathcal{T}^h$ is also a conforming partition of the domain $\Omega$. Denote $N_c$ as the number of interior coarse grid nodes of $\mathcal{T}^H$ and we write $\{ x_i \}_{i=1}^{N_c}$ as the interior coarse nodes in the coarse grid $\mathcal{T}^H$. 
%More details about this method will be provided later. 

The mixed wave problem \eqref{eqn:model} can be numerically solved on the fine grid $\mathcal{T}^h$ by the lowest order Raviart-Thomas ($RT0$) finite element method. Let $(V_h,Q_h)$ be the $RT0$ finite element spaces with respect to $\mathcal{T}^h$. The approximated variational formulation reads: find $(v_h, p_h) \in V_h \times Q_h$ such that 
\begin{eqnarray}
	a(\dot{v}_h,w) - b(w,p_h) = & 0 & \quad \textcolor{cyan}{\forall w \in V_h}, \label{eqn:fine_1}\\
	(\dot{p}_h,q)_\rho + b(v_h,q)  = & (f,q) &\quad \forall q \in Q_h.\label{eqn:fine_2}
\end{eqnarray}
We remark that the solution pair $(v_h,p_h) \in V_h \times Q_h$ is served as a reference solution. In the following sections, we will construct multiscale solution $(v_{\text{ms}}, p_{\text{ms}})$ that gives a good approximation of $(v_h,p_h)$ and derive the corresponding error estimation. For an error bound of the reference solution $(v_h,p_h)$, one can apply the technique in \cite{becache2000analysis} to show that 
$$\norm{v - v_h}_{H(\text{div},\Omega)} + \norm{p-p_h}_{L^2(\Omega)} \leq Ch,$$ where $C>0$ is a constant depending on the regularity of the exact solution $(v,p)$. 

\section{Methodology}\label{sec:method}
In this section, we outline the framework of CEM-GMsFEM and introduce the construction of the multiscale spaces for approximating the fine-scale solution $(v_h, p_h)$. We emphasize that the multiscale basis functions and the corresponding spaces are defined with respect to the coarse grid $\mathcal{T}^H$. The multiscale method consists of two steps. First, we construct a multiscale space $Q_{\text{ms}}$ for approximating the pressure. Based on the space $Q_{\text{ms}}$, we construct another multiscale space $V_{\text{ms}}$ for the velocity.  We remark that these basis functions are locally supported in some coarse patches formed by some coarse elements. Once the multiscale spaces are ready, one can discretize time derivatives in the problem by finite differences and solve the resulting fully discretized problem. 

\subsection{The multiscale method}\label{sec:multiscale}
First, we introduce some notations that will be used later. Given a subset $S \subset \Omega$, we define $V_{h,0}(S) := \{ v \in V_h \cap H(\text{div}; S): v \cdot \mathbf{n}_S = 0 ~\text{on } \partial S\}$ and $Q_h(S) := Q_h \cap L^2(S)$, where $\mathbf{n}_S$ is the unit outward normal vector with respect to the boundary $\partial S$. 

The multiscale solution $(v_{\text{ms}}, p_{\text{ms}}) \in V_{\text{ms}} \times Q_{\text{ms}}$ is obtained by solving the variational formulation
\begin{eqnarray}
	a(\dot{v}_{\text{ms}},w) - b(w,p_{\text{ms}}) = & 0 & \quad \forall w \in V_{\text{ms}}, \label{eqn:ms_var_1}\\
	(\dot{p}_{\text{ms}},q)_\rho + b(v_{\text{ms}},q)  = & (f,q) &\quad \forall q \in Q_{\text{ms}}. \label{eqn:ms_var_2}
\end{eqnarray}
We will detail the constructions for the multiscale spaces in the next sections. 

\subsubsection{Construction of pressure basis functions}
We present the construction of the multiscale space $Q_{\text{ms}}$ for pressure. For each coarse element $K_i \in \mathcal{T}^H$, consider the following local spectral problem over $K_i$: find $(\phi_j^{i}, p_j^{i}) \in V_{h,0}(K_i) \times Q_h(K_i)$ and $\lambda_j^{i} \in \mathbb{R}$ such that 
\begin{eqnarray}
	a(\phi_j^{i}, v) - b(v,p_j^{i}) = & 0 & \quad \forall v \in V_{h,0}(K_i), \label{eqn:sp_1}\\
	b(\phi_j^{i}, q) = & \lambda_j^{i} s_i(p_j^{i},q) & \quad \forall q \in Q_h(K_i), \label{eqn:sp_2}
\end{eqnarray}
for $j = 1,\cdots, L_i$, where $L_i \in \mathbb{N}^+$ is a local parameter depending on the grids $\mathcal{T}^H$ and $\mathcal{T}^h$. Here, the bilinear form $s_i : Q_h(K_i) \times Q_h(K_i) \to \mathbb{R}$ is defined as follows: 
$$ s_i(p,q) := \int_{K_i} \tilde \kappa pq ~dx, \quad \text{where } \tilde \kappa := \kappa \sum_{j=1}^{N_c} \abs{\nabla \chi_j}^2,$$
and $\{ \chi_j\}_{j=1}^{N_c}$ is a set of standard multiscale partition of unity. In particular, given an interior coarse grid node $x_j$, the function $\chi_j$ is defined as the solution to the following system over the coarse neighborhood $\omega_j := \bigcup \{ K \in \mathcal{T}^H: x_j \in \partial K\}$
\begin{eqnarray*}
-\divg{(\kappa \nabla \chi_j)} = &0& \quad \text{in all } K \subset \omega_j, \\
\chi_j = &g_j& \quad \text{on } \partial K \setminus \partial \omega_j ~ (\text{for all } K \subset \omega_j), \\
\chi_j = &0& \quad \text{on } \partial \omega_j,
\end{eqnarray*}
where $g_j$ is a linear continuous function on all edges of $\partial K $. Assume that $s_i(p_j^{i}, p_j^{i}) = 1$ and we arrange the eigenvalues in ascending order such that $0 \leq \lambda_1^{i} \leq \cdots \leq \lambda_{L_i}^i$. For each $i \in \{1, 2, \cdots, N\}$, choose the first $J_i \in \mathbb{N}^+$ ($1 \leq J_i \leq L_i$) eigenfunctions $\{p_j^{i}\}_{j=1}^{J_i}$ corresponding the first $J_i$ smallest eigenvalues. Then, we define the multiscale space $Q_{\text{ms}}$ for pressure as follows: 
$$ Q_{\text{ms}} := \text{span} \{ p_j^{i}: i = 1,\cdots, N, ~ j = 1,\cdots, J_i \}.$$

\subsubsection{Construction of velocity basis functions}
In this section, we present the construction of the multiscale space $V_{\text{ms}}$ for velocity. 
To define the velocity basis, we introduce the operator $\pi: Q_h \to Q_{\text{ms}}$ as follows: 
$$\pi q := \pi (q) = \sum_{i=1}^N \sum_{j=1}^{J_i} s_i(p_j^{i},q) p_j^{i} \quad \tforall q \in Q_h.$$
Next, we define the bilinear form $s: Q_h \times Q_h \to \mathbb{R}$ as $s(p,q) := \sum_{i=1}^N s_i(p,q)$ for all $p,q \in Q_h$. Note that the operator $\pi : Q_h \to Q_{\text{ms}}$ is the projection of $Q_h$ onto the multiscale space $Q_{\text{ms}}$ with respect to the inner product $s(\cdot,\cdot)$. We denote the norm induced by this inner product $s(\cdot,\cdot)$ as $\norm{\cdot}_s$. 

For a given coarse element $K_i \in \mathcal{T}^H$ and a parameter $\ell \in \mathbb{N}^+$, we define $K_{i,\ell}$ to be the oversampled region obtained by enlarging $\ell$ layers from $K_i$. Specifically, we have 
$$K_{i,0} := K_i, \quad K_{i,\ell} := \bigcup \big \{ K\in \mathcal{T}^H: K \cap \overline{K_{i,\ell-1}} \neq \emptyset \big \}, \quad \ell = 1,2, \cdots. $$
For simplicity, we denote $K_i^+$ the oversampled region. For each eigenfunction $p_j^{i} \in Q_{\text{ms}}$ obtained from \eqref{eqn:sp_1}-\eqref{eqn:sp_2}, we define the multiscale basis for velocity $\psi_{j,\text{ms}}^{i} \in V_{h,0}(K_i^+)$ to be the solution of the following system: 
\begin{eqnarray}
	a(\psi_{j,\text{ms}}^{i}, v) - b(v, q_{j,\text{ms}}^{i}) = & 0 & \quad \forall v \in V_{h,0}(K_i^+), \label{eqn:msv_1}\\
	s(\pi q_{j,\text{ms}}^{i},\pi q) + b(\psi_{j,\text{ms}}^{i},q) = &s(p_{j}^{i},q) & \quad \forall q \in Q_h(K_i^+). \label{eqn:msv_2}
\end{eqnarray}
Then, the multiscale space for velocity is defined as 
$$ V_{\text{ms}} := \text{span} \{ \psi_{j,\text{ms}}^{i}: i = 1,\cdots, N, ~ j = 1,\cdots, J_i\}.$$

\subsection{Discretizations}
In this section, we discuss the fully discretization of the problem. The multiscale spaces $V_{\text{ms}}$ and $Q_{\text{ms}}$ obtained in Section \ref{sec:multiscale} are constructed in the spirit of CEM-GMsFEM. To simplify the notation, we assume that 
$$ V_{\text{ms}} = \text{span} \{ \psi_i\}_{i=1}^M \quad \text{and} \quad Q_{\text{ms}} = \text{span} \{ p_i \}_{i=1}^M,$$
where $M := \sum_{i=1}^{N} J_i$. Then, we obtain the following matrix representation from the variational formulation
\begin{eqnarray}
\mathcal{M}_v \dot{\mathbf{v}} - \mathcal{R} \mathbf{p} & = & \mathbf{0}, \\
\mathcal{M}_p \dot{\mathbf{p}} + \mathcal{R}^T \mathbf{v} & = & \mathbf{f},
\end{eqnarray}
where $\mathbf{0} \in \mathbb{R}^M$ is the zero vector in $\mathbb{R}^M$. Moreover, we have the following definitions of the matrices: 
$$ \mathcal{M}_v := \big( a(\psi_i,\psi_j) \big) \in \mathbb{R}^{M \times M}, \quad \mathcal{M}_p := \big( (p_i,p_j) \big) \in \mathbb{R}^{M \times M}, $$
$$ \mathcal{R} := \big( b(\psi_i,p_j) \big) \in \mathbb{R}^{M \times M}, \quad \text{and} \quad \mathbf{f} := \big( (f,p_i) \big) \in \mathbb{R}^M.$$
Note that $\mathbf{v} := \mathbf{v}(t) = \big(\mathbf{v}_i(t) \big)_{i=1}^M \in \mathbb{R}^M$ and $\mathbf{p} := \mathbf{p}(t) = \big(\mathbf{p}_i(t)\big)_{i=1}^M \in \mathbb{R}^M$ are the vectors of coefficients for the approximations $v_{\text{ms}}$ and $p_{\text{ms}}$. More precisely, we have 
$$ v_{\text{ms}} = \sum_{i=1}^M \mathbf{v}_i(t) \psi_i \quad \text{and} \quad p_{\text{ms}} = \sum_{i=1}^M \mathbf{p}_i(t) p_i.$$

For the time discretization, we simply replace the continuous time derivatives by the forward difference in time with a given time step $\tau>0$. In particular, the velocity term will be approximated at $t_n = n \tau$ and the pressure term will be approximated at $t_{n+\frac{1}{2}} = \left(n+\frac{1}{2}\right)\tau$ for $n \in \{0,1,\cdots, N_T \}$ with $T = N_T \tau$. We remark that the time step $\tau$ will be chosen such that $N_T \in \mathbb{N}^+$. 
%That is, we write $t_0 = 0$, $t_n = n \tau$ with $n = 1,\cdots, N$ and $T = N \tau$. 
It leads to the following fully discretized system: given $(\mathbf{v}^n, \mathbf{p}^{n+\frac{1}{2}})$ and for $n \geq 0$, find $(\mathbf{v}^{n+1}, \mathbf{p}^{n+\frac{3}{2}})$ such that 
\begin{eqnarray}
\mathcal{M}_v \frac{\mathbf{v}^{n+1} - \mathbf{v}^n}{\tau} - \mathcal{R} \mathbf{p}^{n+\frac{1}{2}} & = & \mathbf{0}, \label{eqn:time_1} \\
\mathcal{M}_p \frac{\mathbf{p}^{n+\frac{3}{2}}- \mathbf{p}^{n+\frac{1}{2}}}{\tau} + \mathcal{R}^T \mathbf{v}^{n+1} & = & \mathbf{f}^{n+1}, \label{eqn:time_2}
\end{eqnarray}
where $\mathbf{v}^n := \mathbf{v}(t_n)$, $\mathbf{p}^{n+\frac{1}{2}} := \mathbf{p}\left(t_{n+\frac{1}{2}}\right)$, and $\mathbf{f}^n := \big( f(t_n), p_i \big)_{i=1}^M$. 
We remark that $\mathbf{v}^0$ (resp. $\mathbf{p}^{\frac{1}{2}}$) is the vector of coefficient of the projection of the initial condition $h_v$ (resp. $h_p$) with respect to the bilinear form $a(\cdot, \cdot)$ (resp. $(\cdot, \cdot)_\rho$). 
We remark that the stability estimate for $\tau$ can be obtained by standard techniques and the inverse estimate, see for example \cite{gibson2014multiscale}. 
\begin{comment}
The whole algorithm is as follows: 
\begin{enumerate}
\item Construct $\mathcal{M}_v$, $\mathcal{M}_p$ and $\mathcal{R}$. Assume that the initial conditions $\mathbf{v}^0$ and $\mathbf{p}^{\frac{1}{2}}$ are given. 
\item For $n= 0,\cdots, N_T-1$, solves \eqref{eqn:time_1} to obtain $\mathbf{v}^{n+1}$ then solves \eqref{eqn:time_2} to obtain $\mathbf{p}^{n+\frac{3}{2}}$. 
\end{enumerate}
\end{comment}
% \section{The GMsFEM}\label{sec:gmsfem}

\section{Stability and convergence analysis}\label{sec:analysis}
In this section, we present the results of stability and convergence analysis for the mixed CEM-GMsFEM established in Section \ref{sec:method}. To this aim, we first introduce some notations that will be used in this section:  
$$(u,v)_V := \int_\Omega u\cdot v ~ dx, \quad  \norm{v}_V := \sqrt{(v,v)_V}, \quad \norm{v}_a := \sqrt{a(v,v)}, \quad \text{and} \quad \norm{p}_{\rho} := \sqrt{(p,p)_\rho}, $$ 
for all $u, v \in V_h$, and $p \in Q_h$. 
We remark that the norms $\norm{\cdot}_{\rho}$ and $\norm{\cdot}_s$ are equivalent to the standard $L^2$-norm $\norm{\cdot}_{L^2(\Omega)}$. 
For $p \in Q_h$, we have 
$$\rho_{\min} \norm{p}_{L^2(\Omega)}^2\leq \norm{p}_{\rho}^2 \leq \norm{\rho}_{L^\infty(\Omega)} \norm{p}_{L^2(\Omega)}^2.$$
One may show the equivalence between $\norm{\cdot}_s$ and $\norm{\cdot}_{L^2(\Omega)}$ provided $\kappa \in [\kappa_{\min}, \kappa_{\max}]$. Further, we denote $a \lesssim b$ if there is a generic constant $C>0$ such that $a \leq C b$. We write $a \lesssim_T b$ if there exists a generic constant $C = C(T)>0$, depending on $T$, such that $a \leq C(T)b$. 
 
\subsection{Energy conservation and stability}
In this section, we prove the property of energy conservation and the stability of the proposed scheme \eqref{eqn:ms_var_1}-\eqref{eqn:ms_var_2}. In particular, we show the following proposition.
\begin{proposition}\label{prop:sta}
Let $(v_{\text{ms}}, p_{\text{ms}}) \in V_{\text{ms}} \times Q_{\text{ms}}$ be the solution of \eqref{eqn:ms_var_1}-\eqref{eqn:ms_var_2}. Then, the following property of energy conservation holds
$$\frac{d}{dt} \big( \norm{v_{\text{ms}}}_a^2 + \norm{p_{\text{ms}}}_{\rho}^2 \big) = 0 \quad \text{if } f \equiv 0.$$
Moreover, the following estimate holds: 
$$ \max_{0 \leq t \leq T} \Big ( \norm{v_{\text{ms}}(t,\cdot)}_a^2 + \norm{p_{\text{ms}}(t,\cdot)}_{\rho}^2 \Big) \lesssim \bigg( \norm{h_v}_a^2 + \norm{h_p}_{\rho}^2 + \int_0^T \norm{\rho^{-1} f }_{\rho}^2 dt\bigg).$$
\end{proposition}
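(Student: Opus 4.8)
The plan is to derive both assertions from the standard energy identity obtained by testing the variational formulation \eqref{eqn:ms_var_1}--\eqref{eqn:ms_var_2} with the solution itself. First I would take $w = v_{\text{ms}}$ in \eqref{eqn:ms_var_1} and $q = p_{\text{ms}}$ in \eqref{eqn:ms_var_2}, obtaining
\begin{eqnarray*}
a(\dot v_{\text{ms}}, v_{\text{ms}}) - b(v_{\text{ms}}, p_{\text{ms}}) &=& 0, \\
(\dot p_{\text{ms}}, p_{\text{ms}})_\rho + b(v_{\text{ms}}, p_{\text{ms}}) &=& (f, p_{\text{ms}}).
\end{eqnarray*}
Adding the two equations, the coupling terms $\pm b(v_{\text{ms}}, p_{\text{ms}})$ cancel, and since $a$ and $(\cdot,\cdot)_\rho$ are symmetric and time-independent, $a(\dot v_{\text{ms}}, v_{\text{ms}}) = \tfrac12 \tfrac{d}{dt}\norm{v_{\text{ms}}}_a^2$ and similarly for the pressure term. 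This gives $\tfrac12 \tfrac{d}{dt}\big(\norm{v_{\text{ms}}}_a^2 + \norm{p_{\text{ms}}}_\rho^2\big) = (f, p_{\text{ms}})$, which immediately yields the energy conservation statement when $f \equiv 0$.

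For the stability bound, I would estimate the right-hand side $(f, p_{\text{ms}})$. Writing $(f, p_{\text{ms}}) = (\rho^{-1} f, p_{\text{ms}})_\rho$ and applying Cauchy--Schwarz in the $\rho$-weighted inner product gives $(f, p_{\text{ms}}) \le \norm{\rho^{-1} f}_\rho \norm{p_{\text{ms}}}_\rho \le \tfrac12\norm{\rho^{-1} f}_\rho^2 + \tfrac12\norm{p_{\text{ms}}}_\rho^2$. Setting $E(t) := \norm{v_{\text{ms}}(t,\cdot)}_a^2 + \norm{p_{\text{ms}}(t,\cdot)}_\rho^2$, the energy identity becomes $\tfrac{d}{dt} E(t) \le \norm{\rho^{-1} f}_\rho^2 + E(t)$. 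Integrating from $0$ to $t$ and using that the initial data satisfy $E(0) = \norm{v_{\text{ms}}(0)}_a^2 + \norm{p_{\text{ms}}(0)}_\rho^2 \le \norm{h_v}_a^2 + \norm{h_p}_\rho^2$ (since $v_{\text{ms}}(0)$ and $p_{\text{ms}}(0)$ are the $a$- and $\rho$-orthogonal projections of $h_v$ and $h_p$, which are norm-nonincreasing), Gr\"onwall's inequality yields
$$E(t) \le e^{t}\Big( \norm{h_v}_a^2 + \norm{h_p}_\rho^2 + \int_0^T \norm{\rho^{-1} f}_\rho^2 \diff t\Big).$$
Taking the maximum over $t \in [0,T]$ and absorbing $e^T$ into the implied constant gives the claimed estimate, with the constant depending on $T$.

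There is essentially no serious obstacle here; the argument is the textbook energy method for a skew-coupled first-order hyperbolic system, and the CEM-GMsFEM structure plays no role beyond the fact that $V_{\text{ms}}$ and $Q_{\text{ms}}$ are admissible (finite-dimensional, conforming) test spaces so that taking $w = v_{\text{ms}}$ and $q = p_{\text{ms}}$ is legitimate. The only points requiring a little care are: justifying the interchange of $\tfrac{d}{dt}$ with the bilinear forms (fine since everything is finite-dimensional and the solution of the ODE system is smooth in $t$), verifying that the projections defining $v_{\text{ms}}(0)$ and $p_{\text{ms}}(0)$ do not increase the $a$- and $\rho$-norms of the initial data, and choosing whether to state the constant as $e^T$ or simply absorb it via the $\lesssim_T$ notation. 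I would present the proof with the Gr\"onwall step as the one explicit estimate and leave the rest as routine.
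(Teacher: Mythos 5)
Your proof is correct, and the first half (testing with $w=v_{\text{ms}}$, $q=p_{\text{ms}}$, cancelling the $b$-terms, and reading off energy conservation) is exactly the paper's argument. Where you diverge is in closing the stability estimate: you bound $(f,p_{\text{ms}})\le \tfrac12\norm{\rho^{-1}f}_\rho^2+\tfrac12 E(t)$ and invoke Gr\"onwall, whereas the paper avoids Gr\"onwall entirely. It keeps the bound $\tfrac{d}{dt}E \le 2\norm{\rho^{-1}f}_\rho\norm{p_{\text{ms}}}_\rho$, integrates to get $E(t)\le 2\big(\max_{0\le t\le T}\norm{p_{\text{ms}}}_\rho\big)\int_0^T\norm{\rho^{-1}f}_\rho\,dt + E(0)$, and then uses Young's inequality to absorb $\tfrac12\max_t\norm{p_{\text{ms}}}_\rho^2$ into the left-hand side after taking the maximum over $t$, with Jensen converting $\big(\int_0^T\norm{\rho^{-1}f}_\rho\,dt\big)^2$ into $T\int_0^T\norm{\rho^{-1}f}_\rho^2\,dt$. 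Both routes are valid and both constants ultimately depend on $T$, but the paper's maximum-absorption trick yields a constant growing only linearly in $T$, while yours grows like $e^T$; more importantly, the paper explicitly flags (in the remark following the proposition) that this absorption technique is reused verbatim in the proof of Theorem \ref{thm:conv}, so adopting Gr\"onwall here would force you to redo that later argument differently as well. One small point in your favor: you justify $E(0)\le\norm{h_v}_a^2+\norm{h_p}_\rho^2$ via the norm-nonincreasing property of the $a$- and $\rho$-projections defining the discrete initial data, a step the paper's proof glosses over.
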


\begin{proof}
Take $v = v_{\text{ms}}$ in \eqref{eqn:ms_var_1} and $q = p_{\text{ms}}$ in \eqref{eqn:ms_var_2}. Adding two equations, we obtain 
\begin{eqnarray}
	a(\dot{v_{\text{ms}}}, v_{\text{ms}}) + (\dot{p_{\text{ms}}}, p_{\text{ms}})_\rho = \frac{1}{2}\frac{d}{dt} \big( \norm{v_{\text{ms}}}_a^2 + \norm{p_{\text{ms}}}_{\rho}^2 \big) = (\rho^{-1} f,p_{\text{ms}})_{\rho} = 0.
\end{eqnarray}
Using the Cauchy-Schwarz inequality, we have 
$$\frac{d}{dt} \big( \norm{v_{\text{ms}}}_a^2 + \norm{p_{\text{ms}}}_{\rho}^2 \big) \leq 2 \norm{\rho^{-1} f}_{\rho} \norm{p_{\text{ms}}}_{\rho}.$$
It implies that 
\begin{eqnarray}
\norm{v_{\text{ms}}(t,\cdot)}_a^2 + \norm{p_{\text{ms}}(t,\cdot)}_{\rho}^2 \leq 2 \max_{0 \leq t \leq T} \norm{p_{\text{ms}}(t,\cdot)}_{\rho} \bigg( \int_0^T \norm{\rho^{-1} f}_{\rho} dt \bigg)+ \norm{h_v}_a^2 + \norm{h_p}_{\rho}^2.
\label{eqn:derive_1}
\end{eqnarray}
Using Young's and Jensen's inequalities, we obtain 
$$\norm{v_{\text{ms}}(t,\cdot)}_a^2 + \norm{p_{\text{ms}}(t,\cdot)}_{\rho}^2 \leq \frac{1}{2} \max_{0 \leq t \leq T} \norm{p_{\text{ms}}(t,\cdot)}_{\rho}^2 + 2\bigg(\int_0^T \norm{\rho^{-1} f}_{\rho} dt\bigg)^2 + \norm{h_v}_a^2 + \norm{h_p}_{\rho}^2,$$
and therefore 
\begin{eqnarray}
\max_{0 \leq t \leq T} \Big( \norm{v_{\text{ms}}(t,\cdot)}_a^2 + \norm{p_{\text{ms}}(t,\cdot)}_{\rho}^2 \Big) \leq 4 \bigg(  \norm{h_v}_a^2 + \norm{h_p}_{\rho}^2 + \int_0^T \norm{\rho^{-1} f }_{\rho}^2 dt \bigg).
\label{eqn:derive_2} 
\end{eqnarray}
This completes the proof.
\end{proof}

\begin{remark}
The technique showing \eqref{eqn:derive_2} from \eqref{eqn:derive_1} in the proof above will be employed in the convergence analysis below. 
\end{remark}

\subsection{Convergence analysis}
In this section, we show the convergence result of the semi-discretized scheme. We define $(\pp{v}, \pp{p}) \in V_{\text{ms}} \times Q_{\text{ms}}$ to be the {\it multiscale projection} of a given pair $(v,p) \in V_h \times Q_h$ (with $\int_\Omega p~ dx =0$ and satisfying $a(v,w) - b(w,p) = 0$ for any $w \in V_h$) if 
\begin{eqnarray}
\begin{split}
	a(\pp{v}, w) - b(w,\pp{p}) &= 0  & \quad \forall w \in V_{\text{ms}}, \\
	b(\pp{v},q) &= b(v,q) & \quad \forall q \in Q_{\text{ms}}. 
%	b(\pp{v}, q) = b(v,q) \quad  \forall q \in Q_{\text{ms}}. \\
\end{split}
\label{eqn:p1}
\end{eqnarray}
We have the following auxiliary result for the multiscale projection. 

\begin{lemma} \label{lem:ms-cem}
For any $v \in V_h$, the following estimate holds: 
\begin{eqnarray}
\norm{v - \pp{v}}_a \lesssim H \Lambda^{-1/2} . 
\label{eqn:p4}
\end{eqnarray}
where $\Lambda := \displaystyle{\min_{1 \leq i \leq N} \lambda_{J_i +1}^i}$ and $\{ \lambda_j^i \}$ are the eigenvalues obtained from \eqref{eqn:sp_1}-\eqref{eqn:sp_2}. 
\end{lemma}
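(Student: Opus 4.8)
The plan is to compare the multiscale projection $(\pp{v},\pp{p})$ of $(v,p)$ with a suitable \emph{localized} global projection. Since the velocity basis functions $\psi_{j,\mathrm{ms}}^i$ are defined on oversampled patches $K_i^+$, the standard CEM strategy is to introduce an auxiliary \emph{global} projection $(\tilde v,\tilde p)\in V_h\times Q_h$ by the same equations \eqref{eqn:msv_1}--\eqref{eqn:msv_2} but posed on all of $\Omega$ (i.e. with test spaces $V_{h,0}(\Omega)=V_h$ and $Q_h(\Omega)=Q_h$, summing over all $i,j$ with the coefficients of $v$ against the $s$-form). One then shows in two steps: (i) the global projection is a good approximation of $v$, with $\norm{v-\tilde v}_a \lesssim H\Lambda^{-1/2}$; and (ii) the difference between the global projection and the localized one decays exponentially in the number of oversampling layers $\ell$, so that with $\ell \gtrsim \log(1/H)$ this localization error is dominated by $H\Lambda^{-1/2}$. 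Combining (i) and (ii) via the triangle inequality gives \eqref{eqn:p4}.

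First I would establish (i). The key observation is the spectral property built into \eqref{eqn:sp_1}--\eqref{eqn:sp_2}: for any $q\in Q_h$, writing $q = \pi q + (q-\pi q)$, the component $q-\pi q$ lies in the span of the eigenfunctions with eigenvalues $\ge \lambda_{J_i+1}^i$ on each $K_i$, so $\norm{q-\pi q}_s^2 \le \Lambda^{-1}\sum_i \norm{\,\cdot\,}$-type bound controlling $b(\cdot,q-\pi q)$ through the local problems. Concretely, one uses that for $q\in Q_h(K_i)$ orthogonal (in $s_i$) to $\{p_j^i\}_{j=1}^{J_i}$ there is a $w\in V_{h,0}(K_i)$ with $b(w,q) = s_i(q,q)$ and $\norm{w}_a^2 \le (\lambda_{J_i+1}^i)^{-1} s_i(q,q)$. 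Feeding this into the error equation for $v-\tilde v$ (which satisfies $a(v-\tilde v,w)=b(w,p-\tilde p)$ and $b(v-\tilde v,q)= s(\tilde p - \pi\tilde p - \text{stuff},\pi q)$-type relations coming from \eqref{eqn:msv_2}) and using $H\abs{\nabla\chi_j}\lesssim 1$ to convert $s$-norms into $H^2$-weighted $L^2$-norms, one extracts the factor $H\Lambda^{-1/2}$. The identity $a(v-\tilde v, v-\tilde v) = b(v-\tilde v, p-\tilde p)$ together with $b(v-\tilde v,\pi q)=0$ for all $q$ (by the second line of the global version of \eqref{eqn:msv_2} and \eqref{eqn:p1}) lets one replace $p-\tilde p$ by $(I-\pi)(p-\tilde p)$, which is exactly where $\Lambda^{-1}$ enters.

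For (ii), the exponential-decay estimate, I would use the now-standard Caccioppoli/iteration argument: the difference $\psi_{j,\mathrm{ms}}^i - \psi_{j}^{i,\mathrm{glob}}$ satisfies a homogeneous saddle-point problem outside $K_i$, and cutting off with the partition of unity $\chi$ over successive annuli $K_{i,k+1}\setminus K_{i,k}$ yields a recursion $E_{k+1} \le \theta\, E_k$ with $\theta\in(0,1)$ depending only on the coarse-grid geometry and the CEM stabilization, hence $\norm{\psi_{j,\mathrm{ms}}^i-\psi_j^{i,\mathrm{glob}}}_a \lesssim \theta^{\ell}\norm{\cdot}$; summing over the $O(1)$-overlapping patches and using an $s$-orthogonality bound on the coefficients gives $\norm{\tilde v - \pp{v}}_a \lesssim \sqrt{\ell^d}\,\theta^\ell\,(\text{data})$, which is absorbed into $H\Lambda^{-1/2}$ for $\ell = O(\log(H^{-1}\Lambda^{-1/2}))$.

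The main obstacle is step (i), specifically handling the inf-sup structure of the mixed problem correctly: unlike the symmetric-coercive CEM-GMsFEM, here the velocity error is controlled only through $b(v-\pp v, \cdot)$ paired against the pressure error, and one must carefully exploit that the second equation of \eqref{eqn:msv_2} constrains $b(\psi_{j,\mathrm{ms}}^i, q)$ \emph{modulo} the $s$-projection $\pi$ — so the "energy $=$ $\Lambda^{-1}\times$ small" mechanism only works on the complement of $Q_{\mathrm{ms}}$, and one needs the compatibility $b(\pp v - v, q)=0$ for $q\in Q_{\mathrm{ms}}$ (from \eqref{eqn:p1}) to discard the in-space component before invoking the spectral gap. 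Getting the bookkeeping of these two projections ($\pi$ on $Q_h$ and the $V_{\mathrm{ms}}$-projection) consistent is the delicate part; the localization estimate, while technical, is essentially a black box from \cite{chung2018cemmixed,chung2018constraint}.
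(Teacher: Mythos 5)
Your outline is workable but takes a genuinely different, and much longer, route than the paper. You essentially re-derive the CEM machinery from scratch: a global (non-localized) projection, a spectral-gap estimate for it, and the exponential decay of the localization error in the number of oversampling layers $\ell$. All of that is precisely the content of \cite[Theorem 1]{chung2018cemmixed}, which the paper simply invokes as a black box. The paper's actual proof is a short duality argument: it introduces an auxiliary pair $(z,\beta)$ with $z=v-\pp{v}$ and $b(w,\beta)=a(z,w)$ for all $w\in V_h$, observes that $(z,\beta)$ solves a mixed problem whose divergence datum is $\nabla\cdot(v-\pp{v})$, applies the cited theorem once to get $\norm{z-\pp{z}}_a\lesssim H\norm{\nabla\cdot(v-\pp{v})}_{L^2(\Omega)}$, and then writes $\norm{v-\pp{v}}_a^2=a(z-\pp{z},v-\pp{v})$ to conclude $\norm{v-\pp{v}}_a\lesssim H\norm{\nabla\cdot(v-\pp{v})}_{L^2(\Omega)}$. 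The factor $\Lambda^{-1/2}$ then enters exactly through the mechanism you identify: $b(v-\pp{v},q)=0$ for all $q\in Q_{\text{ms}}$ forces $\tilde\kappa^{-1}\nabla\cdot(v-\pp{v})$ into the span of the eigenfunctions $p_j^i$ with $j>J_i$, and the eigenvalue lower bound $\lambda_j^i\geq\Lambda$ from \eqref{eqn:sp_1}--\eqref{eqn:sp_2} does the rest. What your route buys is self-containedness and an explicit record of the hypotheses the citation quietly carries (the oversampling condition $\ell\gtrsim\log(H^{-1})$ and the contrast dependence of the decay rate $\theta$); what it costs is that you must additionally pass from best approximation in $V_{\text{ms}}\times Q_{\text{ms}}$ to the Galerkin-type projection \eqref{eqn:p1}, i.e.\ a discrete inf-sup/C\'ea step for the saddle-point system, which you correctly flag as ``the delicate part'' but do not carry out --- the paper's duality trick is designed precisely to sidestep it. If you intend to complete your version, that step, together with replacing the schematic relations in your step (i) by the exact consequences of \eqref{eqn:msv_1}--\eqref{eqn:msv_2}, is where the remaining work lies.
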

\begin{proof}
For any $v \in V_h$, we define $\beta \in Q_h$ (with $\int_\Omega \beta ~ dx = 0$) such that 
\begin{eqnarray*}
 b(w, \beta) = a(v - \pp{v},w)
\quad 
\forall w \in V_h.
\end{eqnarray*}
Denote $z = v- \pp{v}$. Then, $(z,\beta) \in V_h \times Q_h$ satisfies the following system: 
\begin{eqnarray*}
\begin{split}
	a(z, w) - b(w,\beta) = & 0 & \quad \forall w \in V_{h}, \\
	b(z, q) = & \left( \nabla \cdot (v - \pp{v} ) , q \right) & \quad \forall q \in Q_{h}.
\end{split}
\end{eqnarray*}
Hence, the following estimate holds: 
$$ \norm{z - \pp{z}}_a \lesssim H \norm{\nabla \cdot (v - \pp{v})}_{L^2(\Omega)},$$
using the result of \cite[Theorem 1]{chung2018cemmixed}. Therefore, we have 
\begin{eqnarray*}
\norm{v - \pp{v}}_a^2 &=& a(z, v- \pp{v}) = a(z- \pp{z}, v- \pp{v}) \\
& \leq & \norm{z- \pp{z}}_a \norm{v - \pp{v}}_a \\
& \lesssim & H \norm{\nabla \cdot (v - \pp{v})}_{L^2(\Omega)} \norm{v - \pp{v}}_a.
\end{eqnarray*}
On the other hand, since $b(v - \pp{v}, q) = s(\tilde \kappa^{-1} \nabla \cdot ( v- \pp{v}), q) = 0$ for all $q \in Q_{\text{ms}}$, there exists a set of real numbers $\{ c_j^i \}$ such that 
$$ \tilde \kappa^{-1} \nabla \cdot (v- \pp{v}) = \sum_{i=1}^N \sum_{j>J_i} c_j^i p_j^i.$$
Then, by the orthogonality of the eigenfunctions $\{p_j^i\}$ and \eqref{eqn:sp_1}-\eqref{eqn:sp_2}, we have 
\begin{eqnarray*}
\norm{\nabla \cdot (v - \pp{v})}_{L^2(\Omega)}^2 & = & \norm{\tilde \kappa^{-1/2} \nabla \cdot ( v- \pp{v})}_s^2 \\
& \lesssim & \sum_{i=1}^N \sum_{j>J_i} (c_j^i)^2 \norm{p_j^i}_s^2 ~ \leq ~ \Lambda^{-1} \sum_{i=1}^N \sum_{j>J_i} (c_j^i)^2 \norm{\phi_j^i}_a^2.
\end{eqnarray*}
This completes the proof. 
\end{proof}

\begin{comment}
\begin{remark}
From \cite[Theorem 1]{chung2018cemmixed}, for a given pair $(v,p) \in V_h \times Q_h$, we have the following result
\begin{eqnarray}
 \norm{v - \pp{v}}_V + \norm{p - \pp{p}}_\rho = O(H). 
\end{eqnarray}
\end{remark}
\end{comment}

The main result in this research reads as follows.
\begin{theorem}\label{thm:conv}
Suppose that $(v_h, p_h)$ is the solution to the system \eqref{eqn:fine_1}-\eqref{eqn:fine_2} and $(v_{\text{ms}}, p_{\text{ms}})$ is the solution to \eqref{eqn:ms_var_1}-\eqref{eqn:ms_var_2}. Then, the following estimate holds
\begin{eqnarray}
	\max_{0 \leq t \leq T} \Big( \norm{\pp{v_h} - v_{\text{ms}}}_a^2 + \norm{\pp{p_h} - p_{\text{ms}}}_\rho^2 \Big) \lesssim_T H^2\left ( \Lambda^{-1}+ \norm{\dot{\mathcal{F}}}_{L^2(0,T; L^2(\Omega))}^2 + \norm{\ddot{\mathcal{F}}}_{L^2(0,T;L^2(\Omega))}^2 \right ) \label{eqn:main_result}
\end{eqnarray}
where $(\pp{v_h}, \pp{p_h}) \in V_{\text{ms}} \times Q_{\text{ms}}$ is the multiscale projection of $(v_h,p_h)$ and $\mathcal{F} := f -  \rho \dot{p_h}$. 
\end{theorem}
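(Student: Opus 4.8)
The plan is to compare $(v_{\text{ms}},p_{\text{ms}})$ not directly with $(v_h,p_h)$ but with its multiscale projection, and to run the energy argument of Proposition~\ref{prop:sta} on the difference, controlling the consistency error through Lemma~\ref{lem:ms-cem} and the identity linking $\mathcal{F}$ to $\nabla\cdot v_h$. First, write $v_h-v_{\text{ms}}=(v_h-\pp{v_h})+e_v$ and $p_h-p_{\text{ms}}=(p_h-\pp{p_h})+e_p$ with $e_v:=\pp{v_h}-v_{\text{ms}}\in V_{\text{ms}}$, $e_p:=\pp{p_h}-p_{\text{ms}}\in Q_{\text{ms}}$, so that it suffices to bound $\max_t(\norm{e_v}_a^2+\norm{e_p}_\rho^2)$. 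Subtract \eqref{eqn:ms_var_1}--\eqref{eqn:ms_var_2} from \eqref{eqn:fine_1}--\eqref{eqn:fine_2} tested on $V_{\text{ms}}\subset V_h$, $Q_{\text{ms}}\subset Q_h$, substitute the splitting, and use the defining relations \eqref{eqn:p1}: the second one eliminates $\pp{v_h}$ from the $Q_{\text{ms}}$-equation, while the first one, read for the pair $(\dot v_h,p_h)$ — which is the pair satisfying the compatibility in \eqref{eqn:p1} by \eqref{eqn:fine_1}, so that $\tfrac{d}{dt}\pp{v_h}$ is its projected velocity — makes the $V_{\text{ms}}$-equation homogeneous. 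One is left with the error system
\begin{align}
a(\dot e_v,w)-b(w,e_p)&=0 && \forall\, w\in V_{\text{ms}},\\
(\dot e_p,q)_\rho+b(e_v,q)&=-\Big(\tfrac{d}{dt}\big(p_h-\pp{p_h}\big),\,q\Big)_\rho && \forall\, q\in Q_{\text{ms}}.
\end{align}

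\textbf{Energy estimate.} Take $w=e_v$, $q=e_p$ and add, exactly as in the proof of Proposition~\ref{prop:sta}, to get $\tfrac12\tfrac{d}{dt}(\norm{e_v}_a^2+\norm{e_p}_\rho^2)=-\big(\tfrac{d}{dt}(p_h-\pp{p_h}),e_p\big)_\rho$. Integrating in time, applying Cauchy--Schwarz, and running the Young/Jensen absorption noted in the Remark after Proposition~\ref{prop:sta} to absorb $\max_t\norm{e_p}_\rho$ into the left-hand side, we obtain
$$\max_{0\le t\le T}\big(\norm{e_v}_a^2+\norm{e_p}_\rho^2\big)\ \lesssim_T\ \norm{e_v(0)}_a^2+\norm{e_p(0)}_\rho^2+\int_0^T\Big\|\tfrac{d}{dt}\big(p_h-\pp{p_h}\big)\Big\|_\rho^2\,dt.$$

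\textbf{Bounding the projection errors.} Here $\mathcal{F}=f-\rho\dot p_h$ enters. Testing \eqref{eqn:fine_2} against all $q\in Q_h$ and using $\nabla\cdot V_h\subset Q_h$ gives $\nabla\cdot v_h=\Pi_{Q_h}\mathcal{F}$, hence $\norm{\nabla\cdot\dot v_h}_{L^2(\Omega)}\le\norm{\dot{\mathcal{F}}}_{L^2(\Omega)}$ and $\norm{\nabla\cdot\ddot v_h}_{L^2(\Omega)}\le\norm{\ddot{\mathcal{F}}}_{L^2(\Omega)}$. Differentiating \eqref{eqn:fine_1} in time shows $(\ddot v_h,\dot p_h)$ still satisfies $a(\ddot v_h,w)-b(w,\dot p_h)=0$ for all $w\in V_h$, so it is an admissible argument of the multiscale projection; since the projection commutes with $\partial_t$ (the multiscale spaces being time independent), $\tfrac{d}{dt}(p_h-\pp{p_h})$ equals the pressure component of the projection error of $(\ddot v_h,\dot p_h)$, which by the mixed CEM estimate (Lemma~\ref{lem:ms-cem} and \cite[Theorem 1]{chung2018cemmixed}) is $\lesssim H\norm{\nabla\cdot\ddot v_h}_{L^2(\Omega)}\le H\norm{\ddot{\mathcal{F}}}_{L^2(\Omega)}$. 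For the initial data, $v_{\text{ms}}(0)$ and $p_{\text{ms}}(0)$ are the $a$- and $\rho$-orthogonal projections of $h_v,h_p$ onto $V_{\text{ms}},Q_{\text{ms}}$, so $\norm{e_v(0)}_a\le 2\norm{v_h(0)-\pp{v_h}(0)}_a\lesssim H\Lambda^{-1/2}$ by Lemma~\ref{lem:ms-cem}, and $\norm{e_p(0)}_\rho\le 2\norm{p_h(0)-\pp{p_h}(0)}_\rho\lesssim H\norm{\nabla\cdot\dot v_h(0)}_{L^2(\Omega)}\le H\norm{\dot{\mathcal{F}}(0)}_{L^2(\Omega)}$, after which the one-dimensional embedding $H^1(0,T)\hookrightarrow C[0,T]$ gives $\norm{\dot{\mathcal{F}}(0)}_{L^2(\Omega)}^2\lesssim_T\norm{\dot{\mathcal{F}}}_{L^2(0,T;L^2(\Omega))}^2+\norm{\ddot{\mathcal{F}}}_{L^2(0,T;L^2(\Omega))}^2$. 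Collecting these three contributions in the estimate above yields \eqref{eqn:main_result}.

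\textbf{Main obstacle.} The difficulty concentrates in the last step. One must (i) make sense of the multiscale projection applied to the time-dependent pair $(\dot v_h,p_h)$, verify that it commutes with $\partial_t$, and check that the once- and twice-differentiated pairs $(\dot v_h,p_h)$, $(\ddot v_h,\dot p_h)$ inherit the compatibility condition required in \eqref{eqn:p1} (which follows by differentiating \eqref{eqn:fine_1}); (ii) use the mixed CEM projection estimate in the sharp form "$H$ times a divergence norm'', with the spectral factor $\Lambda^{-1/2}$ entering through the initial velocity error via Lemma~\ref{lem:ms-cem}; and (iii) track the derivative count carefully — $\tfrac{d}{dt}(p_h-\pp{p_h})$ already carries one time derivative and the $O(H)$ pressure estimate supplies the divergence, so the volume integral produces the $\ddot{\mathcal{F}}$-term, while the initial pressure error together with the $H^1$-in-time embedding produces the $\dot{\mathcal{F}}$-term. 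The remaining pieces — the error identity, Cauchy--Schwarz, Young/Jensen, and the absorption of $\max_t\norm{e_p}_\rho$ — are routine and parallel the proof of Proposition~\ref{prop:sta}.
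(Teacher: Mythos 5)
Your argument is correct and follows the same overall strategy as the paper's proof: an energy/Gronwall estimate for the projected error $(\pp{v_h}-v_{\text{ms}},\pp{p_h}-p_{\text{ms}})$, with the $O(H)$ input supplied by Lemma~\ref{lem:ms-cem} and the elliptic mixed CEM estimate of \cite{chung2018cemmixed} applied to the time-differentiated pairs. The differences are in the bookkeeping, and they are worth recording. First, you read the compatibility condition in \eqref{eqn:p1} for the pair $(\dot v_h,p_h)$, which is the pair that actually satisfies $a(\dot v_h,w)=b(w,p_h)$ by \eqref{eqn:fine_1}; this makes your velocity error equation exactly homogeneous. The paper instead rewrites the right-hand side as $a(\dot{\pp{v_h}}-\dot{v_h}-(\pp{v_h}-v_h),\cdot)$, a step that tacitly uses $a(v_h,w)=b(w,p_h)$ --- which the undifferentiated pair does not satisfy for the wave equation --- so your version is the cleaner (and arguably the only internally consistent) reading of the definition of the multiscale projection. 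Second, as a consequence, the $H^2\Lambda^{-1}$ term enters your bound only through the initial velocity error $\norm{e_v(0)}_a$, whereas in the paper it comes from the residual $\norm{\pp{v_h}-v_h}_a$ inside the time integral; relatedly, you must (and do) treat the initial data explicitly, including the $H^1(0,T)\hookrightarrow C[0,T]$ trace argument needed to control $\norm{\dot{\mathcal{F}}(0)}_{L^2(\Omega)}$, while the paper silently takes the error at $t=0$ to vanish. Both routes land on \eqref{eqn:main_result}; yours buys a slightly sharper picture (the $\dot{\mathcal{F}}$ and $\Lambda^{-1}$ contributions are seen to be purely initial-data effects), at the cost of having to verify that the projection commutes with $\partial_t$ and that the differentiated pairs remain compatible --- points you correctly flag, and which the paper also uses implicitly when deriving \eqref{eqn:derive_6}.
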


\begin{proof}%[Proof of Theorem \ref{thm:conv}]
Subtracting \eqref{eqn:ms_var_1} from \eqref{eqn:fine_1} and \eqref{eqn:ms_var_2} from \eqref{eqn:fine_2}, one obtains
\begin{eqnarray*}
	a(\dot{v_h} - \dot{v_{\text{ms}}}, w ) - b(w, p_h - p_{\text{ms}}) = & 0 & \quad \forall w \in V_{\text{ms}}, \\
	(\dot{p_h} - \dot{p_{\text{ms}}}, q )_\rho + b(v_h - v_{\text{ms}}, q) = & 0 & \quad \forall q \in Q_{\text{ms}}.
\end{eqnarray*}
Rewriting the system above, it implies that 
\begin{eqnarray*}
	a(\dot{\pp{v_h}} - \dot{v_{\text{ms}}}, w ) - b(w, \pp{p_h} - p_{\text{ms}}) = & a(\dot{\pp{v_h}} - \dot{v_h}, w ) - b(w, \pp{p_h} - p_h) & \quad \forall w \in V_{\text{ms}}, \\
	(\dot{\pp{p_h}} - \dot{p_{\text{ms}}}, q )_\rho + b(\pp{v_h} - v_{\text{ms}}, q) = & (\dot{\pp{p_h}} - \dot{p_h}, q )_\rho + b(\pp{v_h} - v_h, q) & \quad \forall q \in Q_{\text{ms}}. 
\end{eqnarray*}
Take $w = \pp{v_h} - v_{\text{ms}} \in V_{\text{ms}}$ and $q = \pp{p_h} - p_{\text{ms}} \in Q_{\text{ms}}$ in the system above. Then, adding two equations together, we obtain the following equality
$$ \text{LHS} := a(\dot{\pp{v_h}} - \dot{v_{\text{ms}}}, \pp{v_h} - v_{\text{ms}}) + (\dot{\pp{p_h}} - \dot{p_\text{ms}}, \pp{p_h} - p_{\text{ms}})_\rho = \text{RHS},$$
where
\begin{eqnarray*}
	\text{RHS} &:=&  a(\dot{\pp{v_h}} - \dot{v_h}, \pp{v_h} - v_{\text{ms}} ) - b(\pp{v_h} - v_{\text{ms}}, \pp{p_h} - p_h) + (\dot{\pp{p_h}} - \dot{p_h}, \pp{p_h} - p_{\text{ms}} )_\rho\\
	& ~ &  + b(\pp{v_h} - v_h, \pp{p_h} - p_{\text{ms}}) \\
	& = & a(\dot{\pp{v_h}} - \dot{v_h} - (\pp{v_h} - v_h), \pp{v_h} - v_{\text{ms}} ) 
	%- \textcolor{red}{a( \pp{v_h} - v_h, \pp{v_h} - v_{\text{ms}} ) }
	+ (\dot{\pp{p_h}} - \dot{p_h}, \pp{p_h} - p_{\text{ms}} )_\rho.
\end{eqnarray*}
Here, the properties of multiscale projection \eqref{eqn:p1} are used to simplify the expression above. Hence, we obtain by Cauchy-Schwarz inequality
\begin{eqnarray*}
	\text{LHS} & = & \frac{1}{2} \frac{d}{dt}\Big( \norm{\pp{v_h} - v_{\text{ms}}}_a^2 + \norm{\pp{p_h} - p_{\text{ms}}}_\rho^2 \Big)  = \text{RHS} \\
	& \leq & (\norm{\dot{\pp{v_h}} - \dot{v_h}}_a + \norm{\pp{v_h} - v_h}_a) \norm{\pp{v_h} - v_{\text{ms}}}_a 
	%+ \textcolor{red}{\norm{\pp{v_h} - v_h}_a \norm{\pp{v_h} - v_{\text{ms}}}_a }
	+ \norm{\dot{\pp{p_h}} - \dot{p_h}}_\rho  \norm{\pp{p_h} - p_{\text{ms}}}_\rho.
\end{eqnarray*}
It implies that
\begin{eqnarray*}
	\norm{\pp{v_h} - v_{\text{ms}}}_a^2 + \norm{\pp{p_h} - p_{\text{ms}}}_\rho^2 & \leq & 2 \left( \max_{0 \leq t \leq T} \norm{\pp{v_h} - v_{\text{ms}}}_a \right) \int_0^T \norm{\dot{\pp{v_h}} - \dot{v_h}}_a + \norm{\pp{v_h} - v_h}_a~ dt\\
	%+ \textcolor{red}{\norm{\pp{v_h} - v_h}_a} dt\bigg) \\
	& ~ & + 2 \left ( \max_{0 \leq t \leq T} \norm{\pp{p_h} - p_{\text{ms}}}_\rho \right) \int_0^T \norm{\dot{\pp{p_h}} - \dot{p_h}}_\rho ~ dt.
\end{eqnarray*}
Using the same technique as that of proving Proposition \ref{prop:sta}, we obtain 
\begin{eqnarray}
\max_{0 \leq t \leq T} \Big( \norm{\pp{v_h} - v_{\text{ms}}}_a^2 + \norm{\pp{p_h} - p_{\text{ms}}}_\rho^2 \Big) \lesssim \int_0^T  \norm{\pp{v_h} - v_h}_a^2 + \norm{\dot{\pp{v_h}} - \dot{v_h}}_a^2 
%+ \textcolor{red}{\norm{\pp{v_h} - v_h}_a^2}
+ \norm{\dot{\pp{p_h}} - \dot{p_h}}_\rho^2 dt.
\label{eqn:derive_5}
\end{eqnarray}
Next, we analyze the terms $\norm{\pp{v_h} - v_h}_a$, $\norm{\dot{\pp{v_h}} - \dot{v_h}}_a^2$, 
%$\textcolor{red}{\norm{\pp{v_h} - v_h}_a}^2$, 
and $\norm{\dot{\pp{p_h}} - \dot{p_h}}_\rho^2$. 
%First, we have $$ \norm{v_h - \pp{v_h} }_a = O(H).$$
Using the result of Lemma \ref{lem:ms-cem}, we obtain
\begin{eqnarray} 
\norm{\pp{v_h} - v_h}_a^2 \lesssim H^2 \Lambda^{-1}.
\label{eqn:derive_5-1}
\end{eqnarray}
Note that the fine-scale solution $(v_h, p_h)$ satisfies the following system
\begin{eqnarray*}
	a(\dot{v_h} , w) - b(w,p_h) = & 0 & \quad \forall w \in V_h, \\
	b(\dot{v_h}, q) = & (\dot{\mathcal{F}},q) & \quad \forall q \in Q_h,
\end{eqnarray*}
where $\mathcal{F}= f - \rho \dot{p_h}$. 
Using the properties of the multiscale projection \eqref{eqn:p1}, we obtain 
\begin{eqnarray*}
	a(\dot{\pp{v_h}} , w) - b(w,\pp{p_h}) = & 0 & \quad \forall w \in V_{\text{ms}}, \\
	b(\dot{\pp{v_h}}, q) = & (\dot{\mathcal{F}},q) & \quad \forall q \in Q_{\text{ms}}.
\end{eqnarray*}
Then, by the error estimate in \cite[Theorem 1]{chung2018cemmixed}, one may obtain the following estimate
\begin{eqnarray}
	\norm{\dot{\pp{v_h}} - \dot{v_h}}_a^2 + \norm{\dot{\pp{p_h}} - \dot{p_h}}_\rho^2 
	\lesssim H^2 \left ( \norm{\dot{\mathcal{F}}}_{L^2(\Omega)}^2 +  \norm{\ddot{\mathcal{F}}}_{L^2(\Omega)}^2 \right). \label{eqn:derive_6}
\end{eqnarray}
\begin{comment}
Similarly, using the same estimate, one may obtain the estimate for $\dot{\pp{p_h}}  - \dot{p_h}$ as follows
\begin{eqnarray}
	\norm{\dot{\pp{p_h}} - \dot{p_h}}_\rho^2 \lesssim 
	H^2  \norm{\ddot{\mathcal{F}}}_{L^2(\Omega)}^2. \label{eqn:derive_7}
\end{eqnarray}
\end{comment}

Combining \eqref{eqn:derive_5}, \eqref{eqn:derive_5-1}, and \eqref{eqn:derive_6} yields the desired estimate. This completes the proof. 
\end{proof}
\begin{remark}
From the result \eqref{eqn:p4} and the inequality \eqref{eqn:main_result}, one may easily conclude that 
$\norm{v_h - v_{\text{ms}}}_a = O(H)$. 
Moreover, using the technique in \cite[Theorem 5.4]{chung2016mixed} one may show that $p_h$ satisfies
$$ \norm{p_h - \pp{p_h}}_{\rho} \lesssim \Lambda^{-1} \norm{\dot{v_h}}_a \implies \norm{p_h-p_{\text{ms}}}_\rho \lesssim \Lambda^{-1}\norm{\dot{v_h}}_a + O(H).$$
\end{remark}

\section{Numerical experiments}\label{sec:numerics}
In this section, we perform some numerical experiments using the proposed multiscale method to solve the wave equation in mixed formulation. Let the computational domain and the density be $\Omega = (0,1)^2$, $T = 0.4$, and $\rho \equiv 1$. 
In the simulation, we use (uniform) rectangular mesh to perform the spatial discretization and set the fine mesh size to be $h = \sqrt{2}/400$. We will specify the coarse mesh size in the examples below. For the time discretization, we set the time step to be $\tau = 10^{-4}$. 
In both examples, let the initial conditions be $h_v = 0$ and $h_p = 0$. Recall that $\ell \in \mathbb{N}$ is the number of oversampling layers used to perform the constraint energy minimization. We set $J_i = J$ (uniformly) to form the auxiliary multiscale space $Q_{\text{ms}}$. 

We will use the following quantities to measure the performance of the proposed method: 
$$ e_{\text{vel}} := \frac{\norm{v_h - v_{\text{ms}}}_a}{\norm{v_h}_a} \quad \text{and} \quad e_{\text{pre}} := \frac{\norm{p_h - p_{\text{ms}}}_{\rho}}{\norm{p_h}_{\rho}},$$
where $(v_h, p_h)$ is the fine-scale solution computed on $\mathcal{T}^h$ and $(v_{\text{ms}}, p_{\text{ms}})$ is the multiscale solution obtained by solving \eqref{eqn:ms_var_1}-\eqref{eqn:ms_var_2}. 

%The coarse mesh size is set to be $H = \sqrt{2}/15$ and the fine mesh size is $h = \sqrt{2}/225$. The terminal time is $T = 1$ and the time step is $\tau = 10^{-4}$. We set all the boundary and initial conditions to be zero. 
\begin{example}[Heterogeneous model] \label{exp:mw1}
In this example, we consider the case with high-contrast permeability field over the computational domain. See Figure \ref{fig:exp_1_kappa} for an illustration of this permeability field. The source function $f:\mathbb{R}^2 \to \mathbb{R}$ is set to be 
$$ f(x) = \left \{ \begin{array}{rc}
1 & x \in [0, 1/8]^2, \\
-1 & x \in [7/8, 1]^2, \\
0 & \text{otherwise}. \end{array} \right . $$
Clearly, it holds that $f \in L^2(\Omega)$ and it satisfies $\int_\Omega f ~dx = 0$.
\begin{figure}[h!]
\centering
\mbox{
\subfigure[$\kappa$ in Example \ref{exp:mw1}.]{
\includegraphics[width = 2.7in]{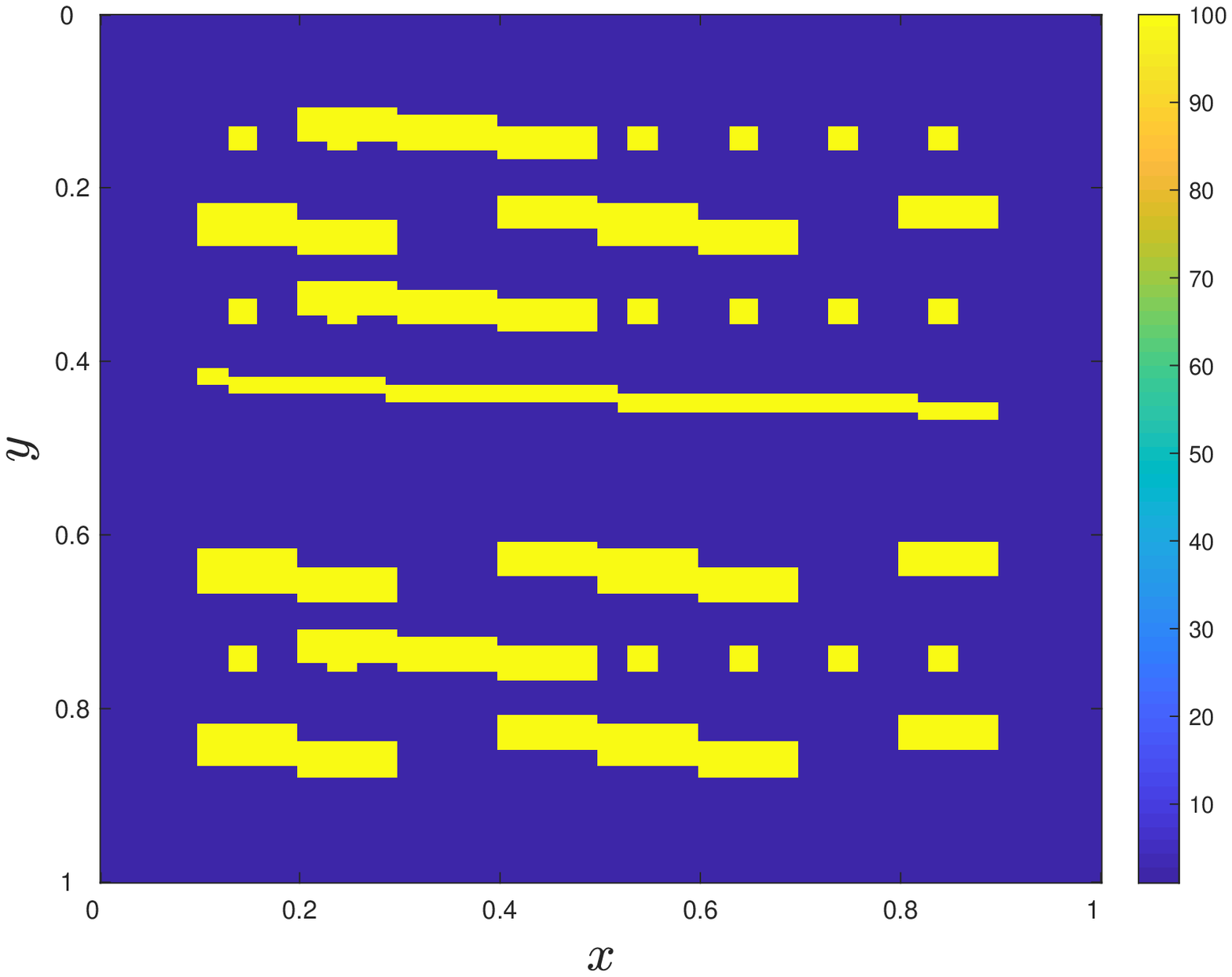}
\label{fig:exp_1_kappa}}
\subfigure[$\kappa$ in Example \ref{exp:mw2}.]{
\includegraphics[width = 2.7in]{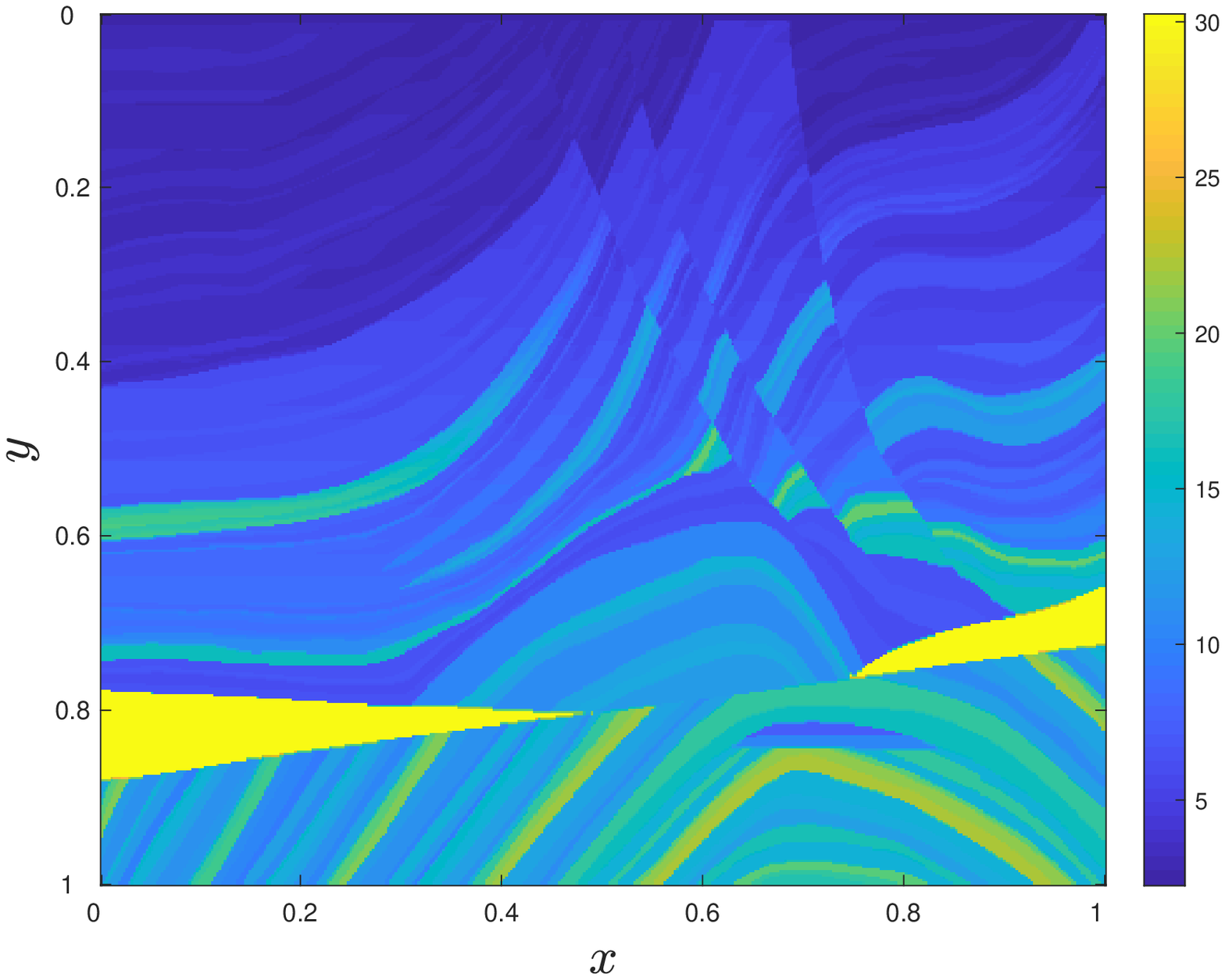}
\label{fig:exp_2_kappa}}
}
\caption{Permeability fields used in the simulation.}
\end{figure}
The solution profiles of the reference solution $(v_h, p_h)$ and the multiscale solution $(v_{\text{ms}}, p_{\text{ms}})$ with $J = 5$ and $\ell = 4$ at the terminal time $t=T$ are reported in Figures \ref{fig:ref_exp1} and \ref{fig:ms_exp1}, respectively. 
    \begin{figure}[h!]
        \centering
        \mbox{
        \subfigure[Pressure $p_h$.]{
        \includegraphics[width=2.7in]{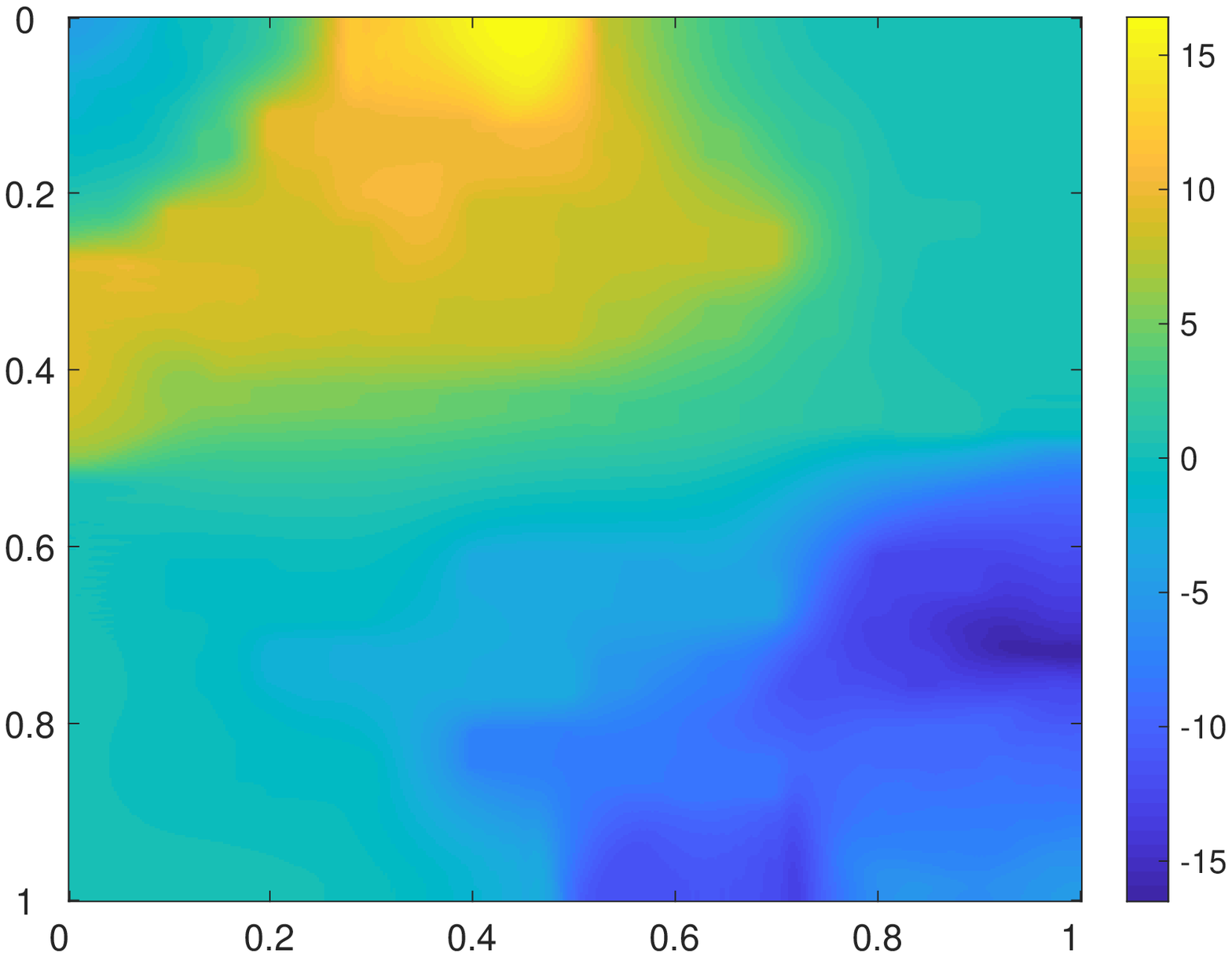} 
        \label{fig:ref_p_exp1}}
        \subfigure[Velocity $v_h$.]{
        \includegraphics[width=2.7in]{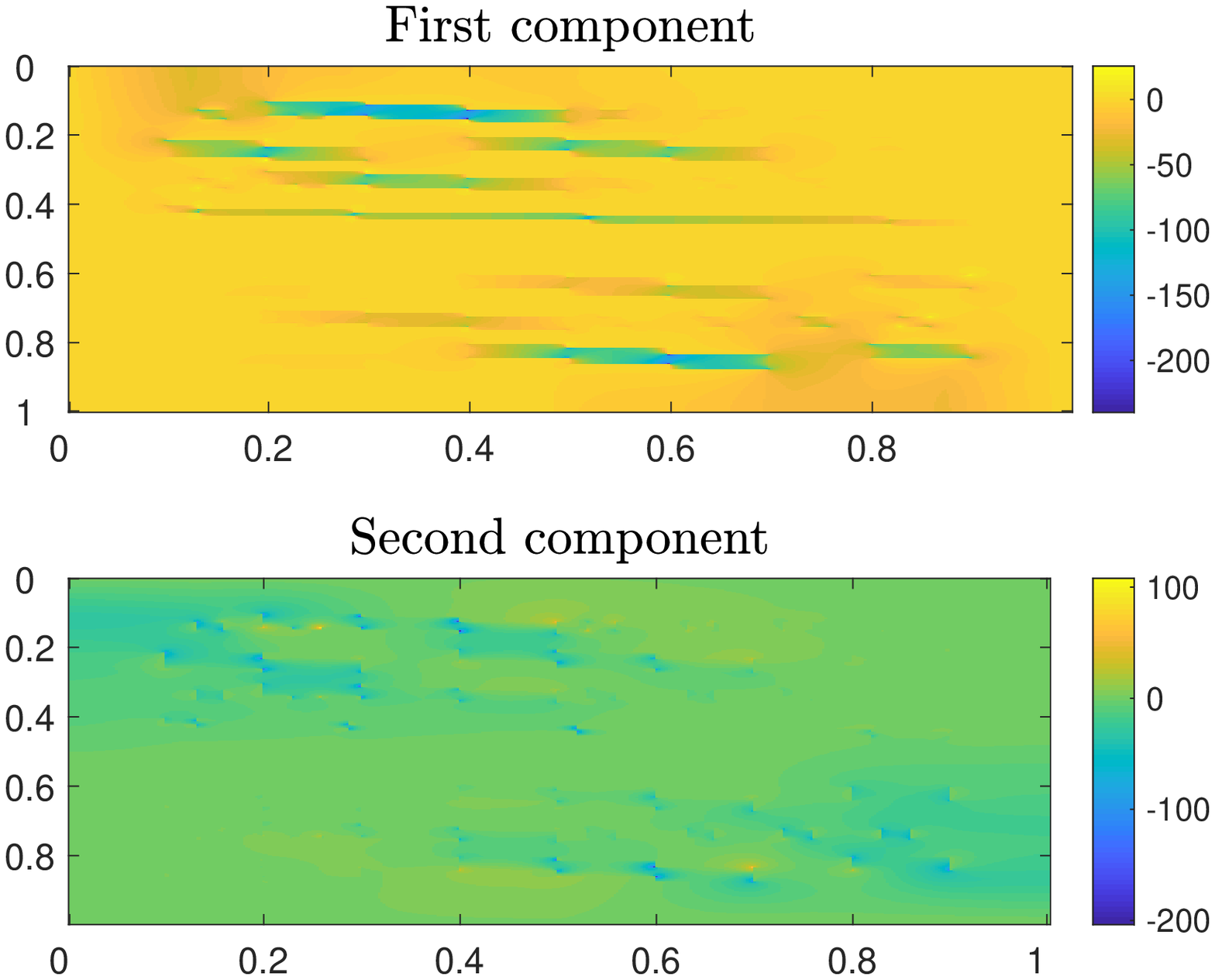}
        \label{fig:ref_v_exp1}}
     	}   
	\caption{Reference solution at $t = T$ in Example \ref{exp:mw1}.}
	\label{fig:ref_exp1}
    \end{figure}
    
    \begin{figure}[h!]
        \centering
        \mbox{
        \subfigure[Pressure $p_{\text{ms}}$.]{
        \includegraphics[width=2.7in]{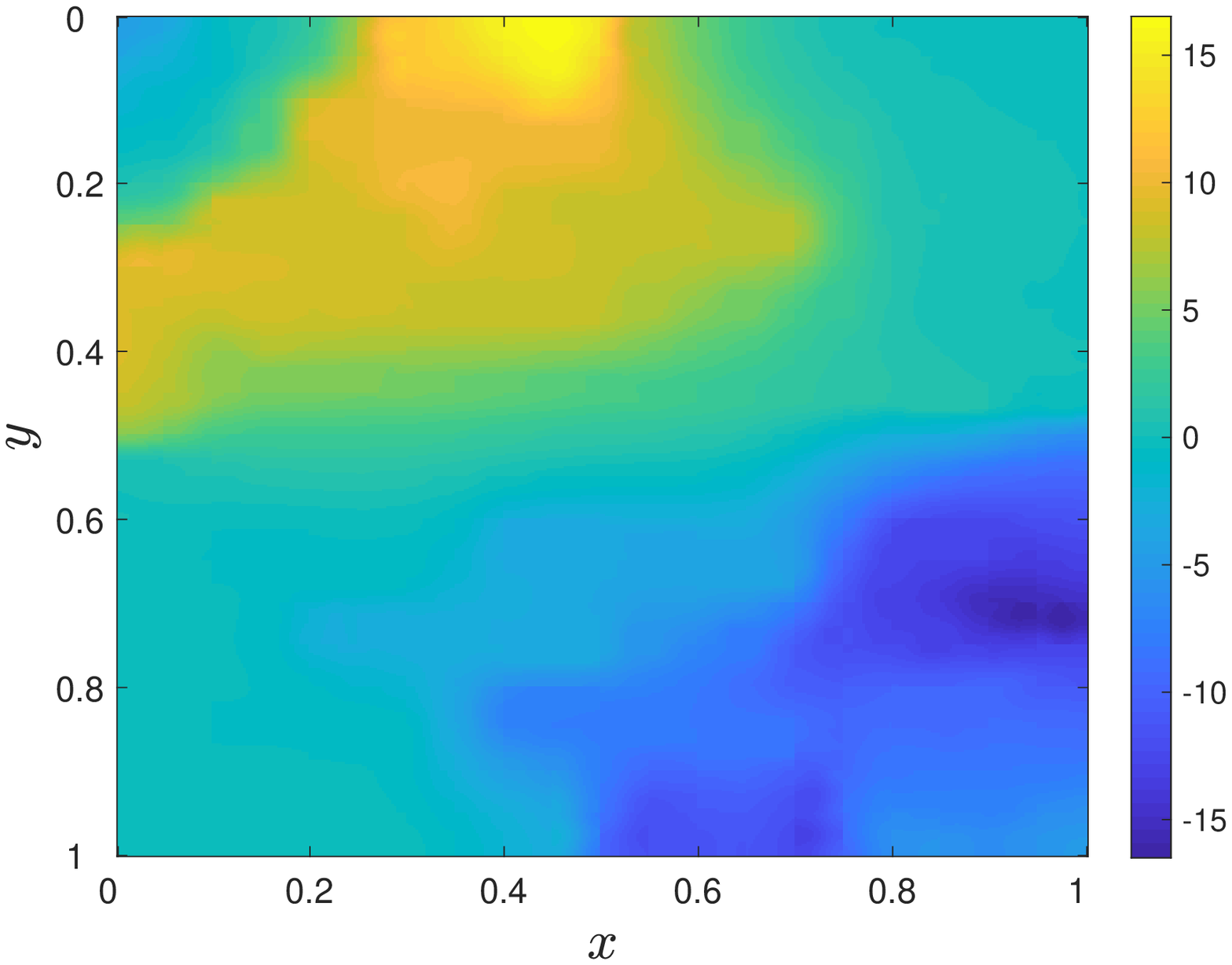}
        \label{fig:ms_p_exp1}}
        \subfigure[Velocity $v_{\text{ms}}$.]{
        \includegraphics[width=2.7in]{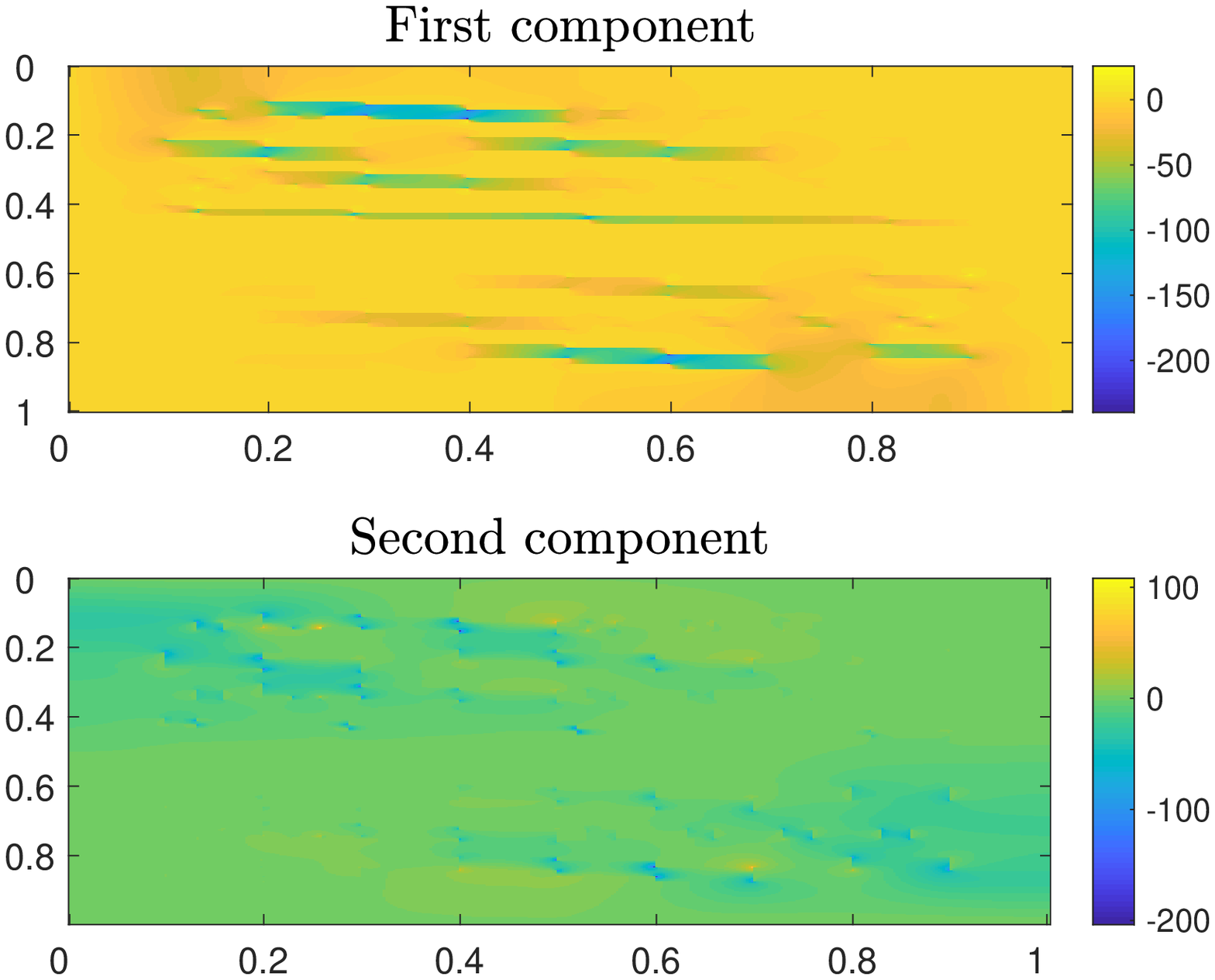}
        \label{fig:ms_v_exp1}}
        }
        \caption{Multiscale approximation at $t = T$ in Example \ref{exp:mw1}. $H = \sqrt{2}/20$; $J = 5$; $\ell = 3$.}
        \label{fig:ms_exp1}
    \end{figure}
We explore the efficiency of the proposed method by showing the errors in velocity and pressure. Tables \ref{tab:pe_exp1} and \ref{tab:ve_exp1} show the quantities $e_{\text{vel}}$ and $e_{\text{pre}}$ in different time levels with different $H$ while fixing $J = 5$ and $\ell = 3$. One can see that the error (either in pressure or velocity) decreases as the coarse mesh size decreases. 

\begin{table}[h!]
	\centering
	\begin{tabular}{c|c|c||c|c|c|c}
		$J$ & $\ell$ & $H$ & $t = 0.1$ & $t = 0.2$ & $t = 0.3$ & $t = 0.4$\\ 
		\hline 
		$5$ & $3$& $\sqrt{2}/10$ & $5.2681\%$ & $5.5469\%$ & $6.3184\%$ & $6.7059\%$ \\ 
		\hline
		$5$ & $3$& $\sqrt{2}/20$ & $2.2552\%$ & $2.6246\%$ & $2.8948\%$ & $2.9263\%$ \\ 
		\hline
		$5$ & $3$& $\sqrt{2}/40$ & $1.0362\%$ & $1.0371\%$ & $1.1690\%$ & $1.2529\%$ \\ 
	\end{tabular}
	\caption{$e_{\text{pre}}$ in Example \ref{exp:mw1} with varying coarse mesh size $H$.}
	\label{tab:pe_exp1}
\end{table}

\begin{table}[h!]
	\centering
	\begin{tabular}{c|c|c||c|c|c|c}
	$J$ & $\ell$ & $H$ & $t = 0.1$ & $t = 0.2$ & $t = 0.3$ & $t = 0.4$\\ 
	\hline
	$5$ & $3$ & $\sqrt{2}/10$ & $3.8047\%$ & $4.0189\%$ & $4.3737\%$ & $4.6163\%$ \\ 
	\hline
	$5$ & $3$ & $\sqrt{2}/20$ & $2.4546\%$ & $1.6313\%$ & $1.4103\%$ & $1.4720\%$ \\ 
	\hline
	$5$ & $3$ & $\sqrt{2}/40$ & $1.1189\%$ & $0.6886\%$ & $0.6234\%$ & $0.6390\%$ \\ 
	\end{tabular}
	\caption{$e_{\text{vel}}$ in Example \ref{exp:mw1} with varying coarse mesh size $H$.}
	\label{tab:ve_exp1}
\end{table}

Next, we fix the coarse mesh size $H = \sqrt{2}/20$ and the number of oversampling layers $\ell = 4$. We adjust the number of basis functions $J$ to see how this factor affects the errors in velocity and pressure. Tables \ref{tab:nbp_exp1} and \ref{tab:nbv_exp1} show the errors $e_{\text{vel}}$ and $e_{\text{pre}}$. Under this setting, one may observe that the errors $e_{\text{pre}}$ and $e_{\text{vel}}$ are reduced as the number of basis functions $J$ increases. We remark that once the number of basis functions reaches a certain level, the decay of error becomes slower due to the fact that the decay of eigenvalues obtained in \eqref{eqn:sp_1}-\eqref{eqn:sp_2} slow down. 
\begin{table}[h!]
\centering
	\begin{tabular}{c|c|c||c|c|c|c}
	$J$ & $\ell$ & $H$ & $t = 0.1$ & $t = 0.2$ & $t = 0.3$ & $t = 0.4$ \\ 
	\hline
	$1$ & $4$ & $\sqrt{2}/20$ & $9.6344\%$ & $9.8757\%$ & $10.5834\%$ & $10.6780\%$ \\ 
	\hline
	$2$ & $4$ & $\sqrt{2}/20$ & $7.9885\%$ & $6.2347\%$ & $6.3412\%$ & $6.4201\%$ \\ 
	\hline
	$3$ & $4$ & $\sqrt{2}/20$ & $3.6292\%$ & $3.0248\%$ & $2.8402\%$ & $2.5880\%$ \\ 
	\hline
	$4$ & $4$ & $\sqrt{2}/20$ & $3.3369\%$ & $2.6432\%$ & $2.4863\%$ & $2.2249\%$ \\ 
	\end{tabular}
	\caption{$e_{\text{pre}}$ in Example \ref{exp:mw1} with varying number of basis functions $J$.}	
	\label{tab:nbp_exp1}
\end{table}

\begin{table}[h!]
\centering
	\begin{tabular}{c|c|c||c|c|c|c}
	$J$ & $\ell$ & $H$ & $t = 0.1$ & $t = 0.2$ & $t = 0.3$ & $t = 0.4$ \\ 
	\hline  	
	$1$ & $4$ & $\sqrt{2}/20$ & $11.7671\%$ & $5.1808\%$ & $4.3885\%$ & $4.6301\%$ \\ 
	\hline
	$2$ & $4$ & $\sqrt{2}/20$ & $7.5439\%$ & $2.7621\%$ & $2.0997\%$ & $2.0953\%$ \\ 
	\hline
	$3$ & $4$ & $\sqrt{2}/20$ & $2.8664\%$ & $1.2072\%$ & $0.9478\%$ & $0.9089\%$ \\ 
	\hline
	$4$ & $4$ & $\sqrt{2}/20$ & $2.0116\%$ & $0.8898\%$ & $0.7309\%$ & $0.7107\%$ \\ 
	\end{tabular}
	\caption{$e_{\text{vel}}$ in Example \ref{exp:mw1} with varying number of basis functions $J$.}	
	\label{tab:nbv_exp1}
\end{table}

Further, we change the number of oversampling layers $\ell$ with fixed coarse mesh size $H = \sqrt{2}/20$ and $J = 6$ to see the relation between the error and the number of oversampling layers. Tables \ref{tab:olp_exp1} and \ref{tab:olv_exp1} show the corresponding results. One may observe that the accuracy of the solution is improved if more layers are included in the simulation. Once the number of layers $\ell$ exceeds a certain level, the decay of error stagnates. We remark that based on the theoretical findings in \cite{chung2018cemmixed}, the number of layers should depend on the logarithm of the maximum value of the contrast. 

\begin{table}[h!]
\centering
	\begin{tabular}{c|c|c||c|c|c|c}
	$J$ & $\ell$ & $H$ & $t = 0.1$ & $t = 0.2$ & $t = 0.3$ & $t = 0.4$ \\ 
	\hline
	$6$ & $1$ & $\sqrt{2}/20$ & $2.9318\%$ & $2.1629\%$ & $2.1566\%$ & $1.8118\%$ \\ 
	\hline	
	$6$ & $2$ & $\sqrt{2}/20$ & $2.6116\%$ & $2.2110\%$ & $2.0092\%$ & $1.7443\%$ \\ 
	\hline	
	$6$ & $3$ & $\sqrt{2}/20$ & $2.6751\%$ & $2.0821\%$ & $2.0062\%$ & $1.8385\%$ \\ 
	\hline	
	$6$ & $4$ & $\sqrt{2}/20$ & $2.5124\%$ & $2.0712\%$ & $2.0029\%$ & $1.8621\%$ \\ 
	\end{tabular}
	\caption{$e_{\text{pre}}$ in Example \ref{exp:mw1} with varying oversampling layers $\ell$.}
	\label{tab:olp_exp1}
\end{table}
\vspace{-0.5cm}
\begin{table}[h!]
\centering
	\begin{tabular}{c|c|c||c|c|c|c}
	$J$ & $\ell$ & $H$ & $t = 0.1$ & $t = 0.2$ & $t = 0.3$ & $t = 0.4$ \\ 
	\hline
	$6$ & $1$ & $\sqrt{2}/20$ & $4.3907\%$ & $5.0154\%$ & $6.2635\%$ & $7.7753\%$ \\ 
	\hline	
	$6$ & $2$ & $\sqrt{2}/20$ & $1.9753\%$ & $0.9736\%$ & $0.8820\%$ & $0.9278\%$ \\ 
	\hline	
	$6$ & $3$ & $\sqrt{2}/20$ & $1.9382\%$ & $0.8356\%$ & $0.6610\%$ & $0.6385\%$ \\ 
	\hline	
	$6$ & $4$ & $\sqrt{2}/20$ & $1.9387\%$ & $0.8224\%$ & $0.6532\%$ & $0.6311\%$ \\ 
	\end{tabular}
	\caption{$e_{\text{vel}}$ in Example \ref{exp:mw1} with varying oversampling layers $\ell$.}
	\label{tab:olv_exp1}	
\end{table}
\end{example}

\begin{example}[Marmousi model] \label{exp:mw2}
In this example, we test the proposed method on the {\it Marmousi} benchmark model. The permeability field used in this example is sketched in Figure \ref{fig:exp_2_kappa}. The source function $f$ is chosen as the first derivative of the Gaussian wavelet with central frequency $f_0>0$
$$ f(x,t) := g(x)(t-2f_0^{-1}) \exp\big(-\pi^2 f_0^2 (t-2f_0^{-1})^2 \big),$$
$$g(x) := 10\delta^{-2} \exp\big(\abs{x-\mathbf{c}}^2\delta^{-2}\big),$$
for $x \in \Omega$, $t \in (0,T]$, and $\mathbf{c} = (0.5, 0.5)^T$. Note that $\delta>0$ measures the size of the support of the source. Here, we denote the two-dimensional Euclidean distance as $\abs{x-y}$ for any $x, y \in \mathbb{R}^2$. 

The profiles of solutions at the terminal time $t = T$ are sketched in Figures \ref{fig:ref_exp2} and \ref{fig:ms_exp2}. One may see that the proposed multiscale method can capture most of the details of the reference solution with less computational cost. 

    \begin{figure}[h!]
        \centering
        \mbox{
        \subfigure[Pressure $p_h$.]{
        \includegraphics[width=2.7in]{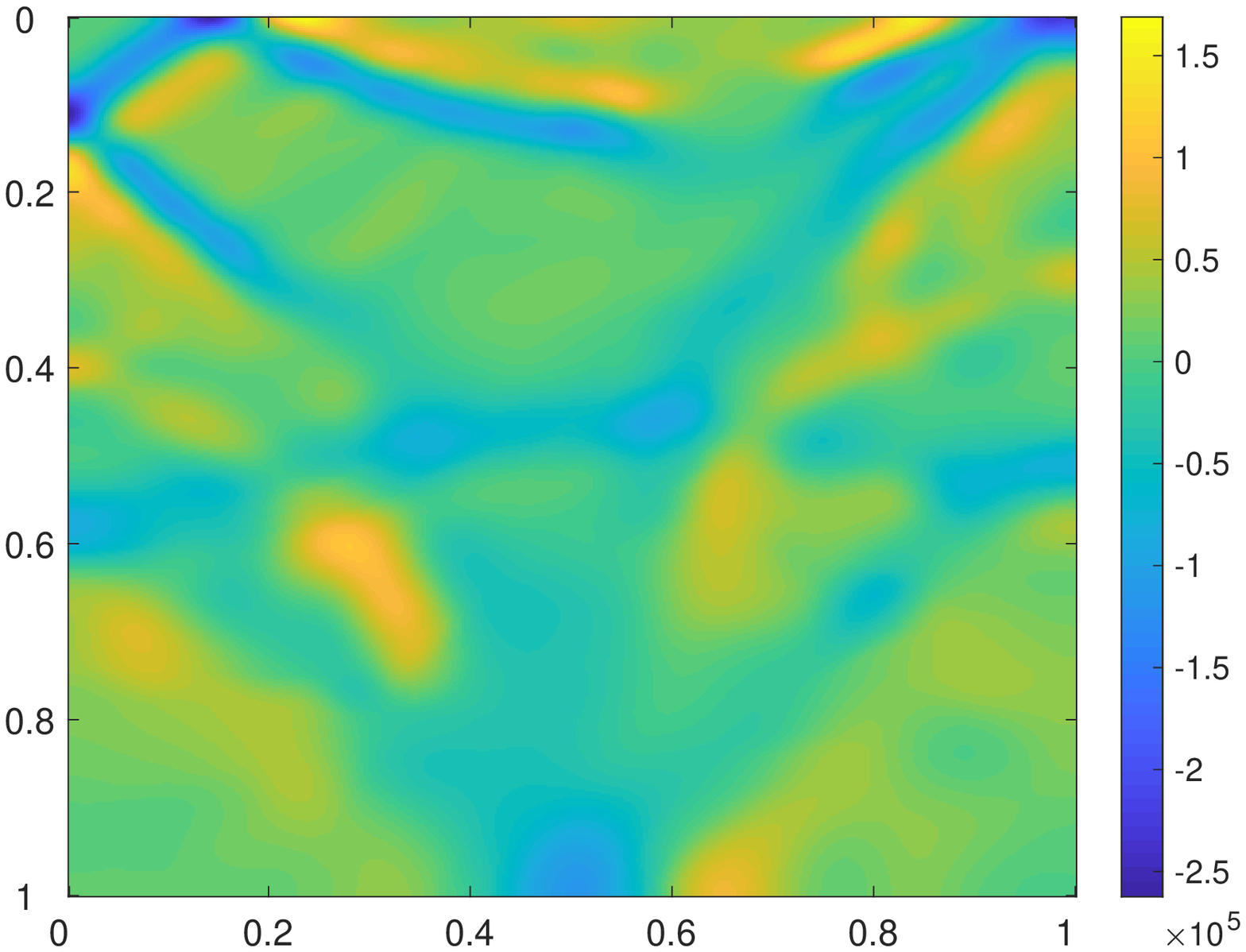}
        \label{fig:ref_p_exp2}}
        \subfigure[Velocity $v_h$.]{
        \includegraphics[width=2.7in]{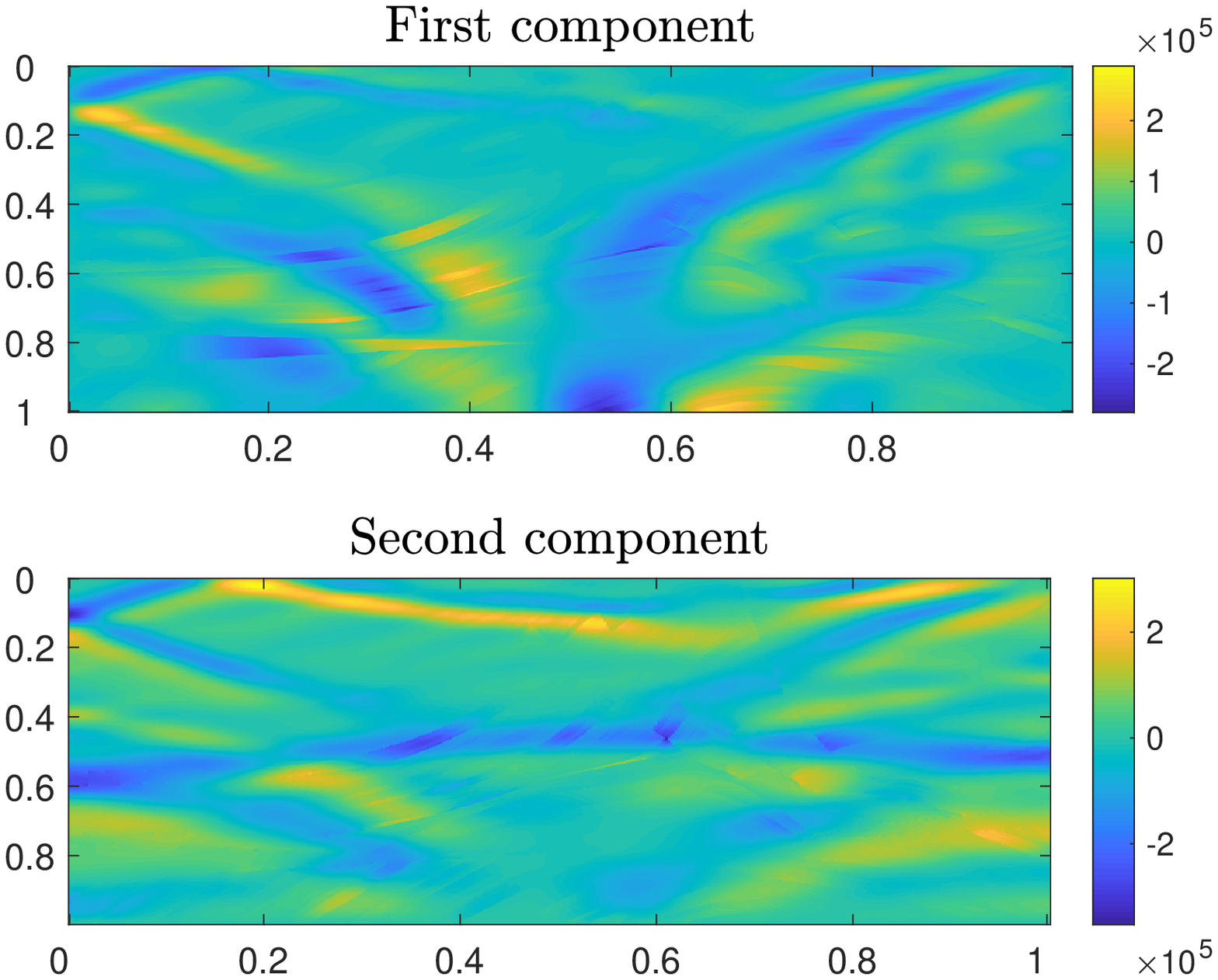}
        \label{fig:ref_v_exp2}}
        }
        \caption{Reference solution at $t = T$ in Example \ref{exp:mw2}. $f_0 = 20$; $\delta = 0.02$. }
       \label{fig:ref_exp2}
    \end{figure}

    \begin{figure}[h!]
        \centering
        \mbox{
        \subfigure[Pressure $p_{\text{ms}}$.]{
        \includegraphics[width=2.7in]{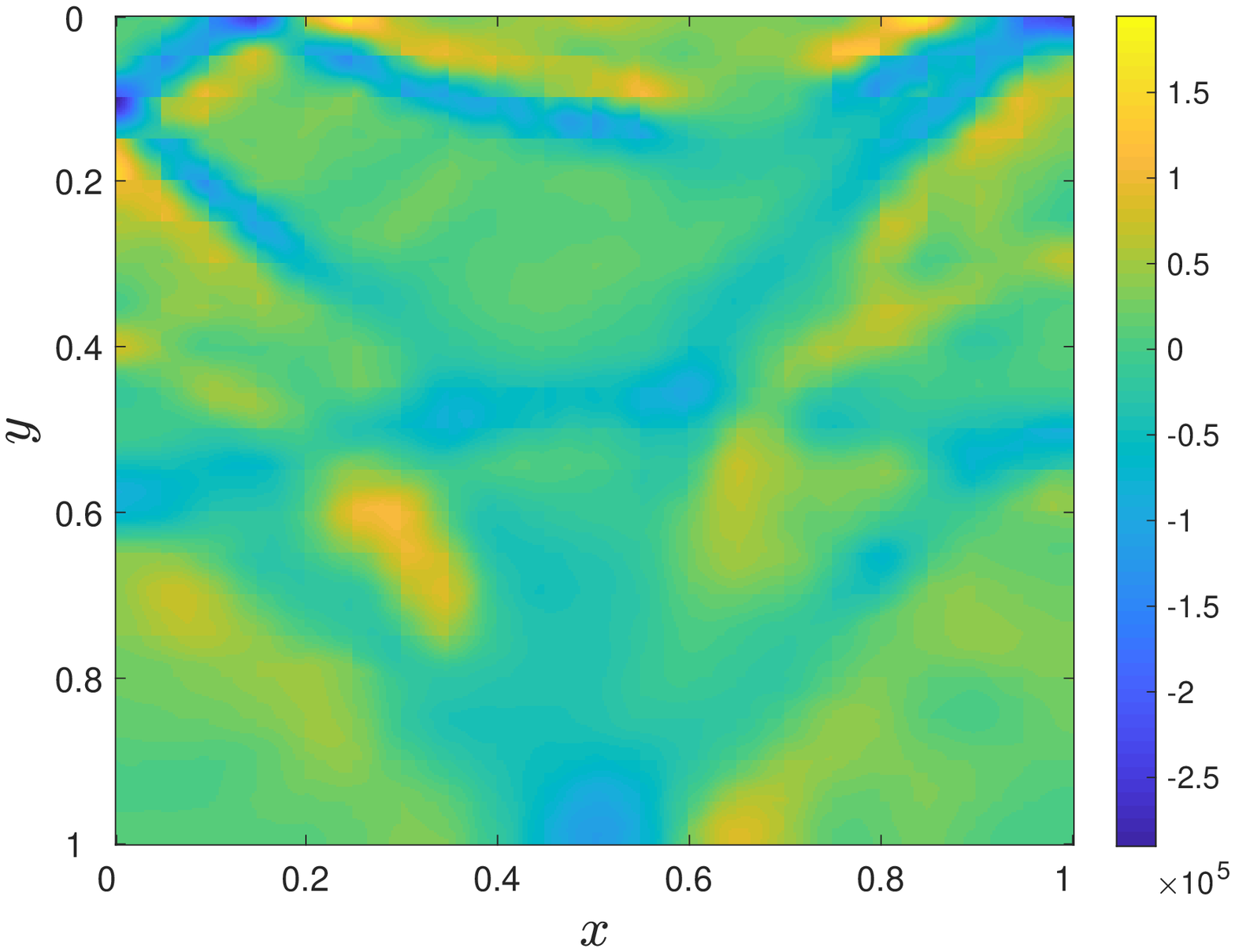}
        \label{fig:ms_p_exp2}}
        \subfigure[Velocity $v_{\text{ms}}$.]{
        \includegraphics[width=2.7in]{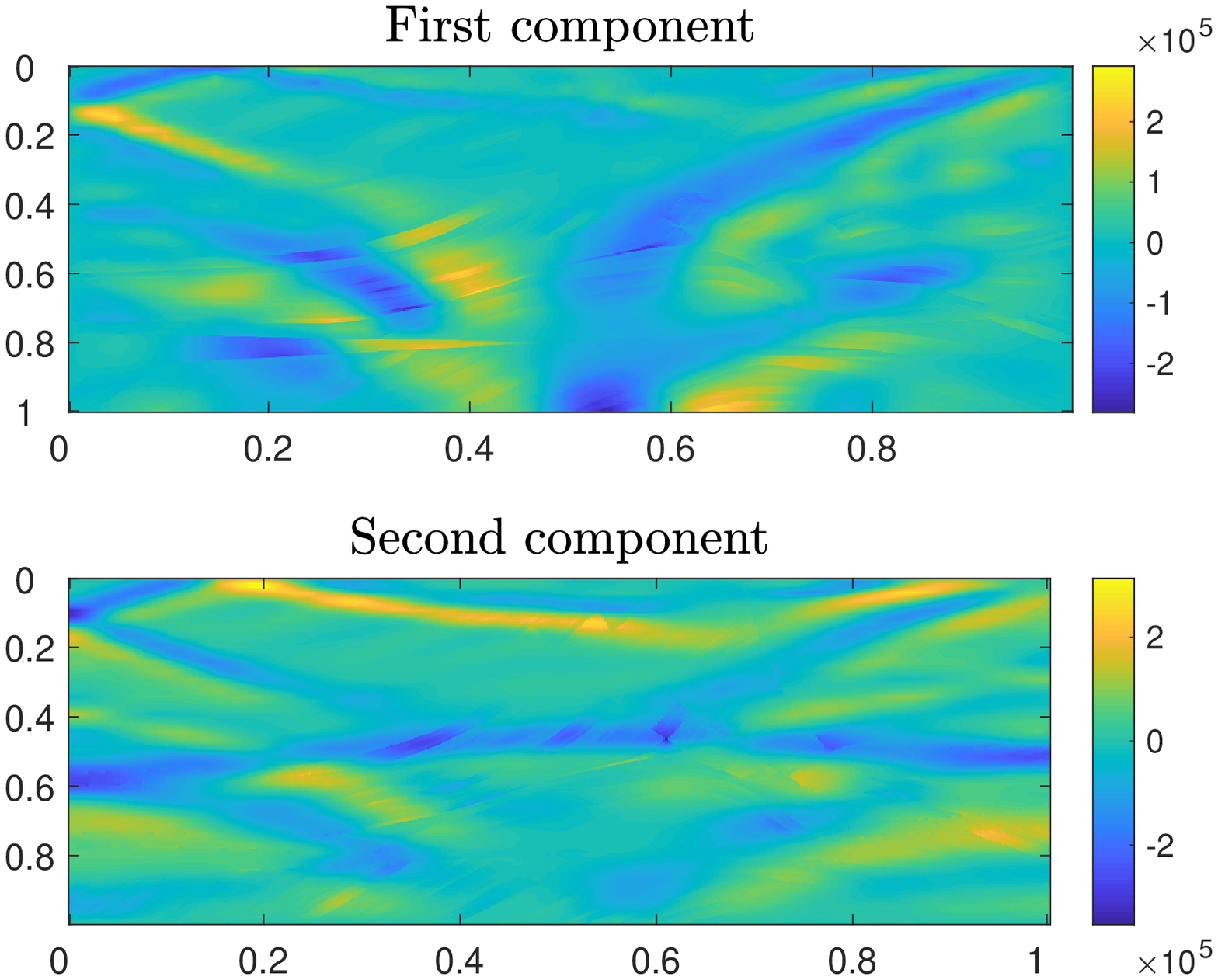}}
	\label{fig:ms_v_exp2}
        }
        \caption{Multiscale approximation at $t = T$ in Example \ref{exp:mw2}. $f_0 = 20$; $\delta = 0.02$; $H = \sqrt{2}/20$; $J = 5$; $\ell = 3$.}
        \label{fig:ms_exp2}
    \end{figure}
    
We report the results of errors in both velocity and pressure using the proposed multiscale method with varying coarse mesh size. Tables \ref{tab:pe_exp2} and \ref{tab:ve_exp2} record the errors with fixed $J = 5$, $\ell = 3$, $f_0 = 20$, and $\delta = 0.02$. We remark that $\delta = 4\sqrt{2} h$ in this case. 
One can observe that the errors in pressure and velocity reduce as the coarse mesh size decreases. 
\begin{table}[h!]
\centering
	\begin{tabular}{c|c|c|c||c|c|c|c}
	$J$ & $\ell$ & $(f_0, \delta)$ & $H$ & $t = 0.1$ & $t = 0.2$ & $t = 0.3$ & $t = 0.4$ \\ 
	\hline
	$5$& $3$ & $(20,0.02)$ & $\sqrt{2}/10$ & $12.6666\%$ & $49.7508\%$ & $66.1003\%$ & $73.1510\%$ \\ 
	\hline	
	$5$& $3$ & $(20,0.02)$ & $\sqrt{2}/20$ & $7.3507\%$ & $13.3181\%$ & $24.2448\%$ & $33.1721\%$ \\ 
	\hline	
	$5$& $3$ & $(20,0.02)$ & $\sqrt{2}/40$ & $2.6322\%$ & $3.3151\%$ & $4.7303\%$ & $6.3163\%$ \\ 
	\hline	
	$5$& $3$ & $(20,0.02)$ & $\sqrt{2}/80$ & $0.9754\%$ & $1.0834\%$ & $1.2581\%$ & $1.3355\%$ \\ 
	\end{tabular}
	\caption{$e_{\text{pre}}$ in Example \ref{exp:mw2} with varying coarse mesh size $H$.}
	\label{tab:pe_exp2}
\end{table}

\begin{table}[h!]
\centering
	\begin{tabular}{c|c|c|c||c|c|c|c}
	$J$ & $\ell$ & $(f_0, \delta)$ & $H$ & $t = 0.1$ & $t = 0.2$ & $t = 0.3$ & $t = 0.4$ \\ 
	\hline
	$5$& $3$ & $(20,0.02)$ & $\sqrt{2}/10$ & $42.6513\%$ & $46.1319\%$ & $71.7437\%$ & $88.0726\%$ \\ 
	\hline
	$5$& $3$ & $(20,0.02)$ & $\sqrt{2}/20$ & $8.8847\%$ & $10.3469\%$ & $25.1502\%$ & $33.1973\%$ \\ 
	\hline
	$5$& $3$ & $(20,0.02)$ & $\sqrt{2}/40$ & $1.8760\%$ & $2.4122\%$ & $3.5537\%$ & $5.5809\%$ \\ 
	\hline
	$5$& $3$ & $(20,0.02)$ & $\sqrt{2}/80$ & $0.5822\%$ & $1.1813\%$ & $1.1804\%$ & $1.2015\%$ \\ 
	\end{tabular}
	\caption{$e_{\text{vel}}$ in Example \ref{exp:mw2} with varying coarse mesh size $H$.}	
	\label{tab:ve_exp2}
\end{table}

Furthermore, we calculate the multiscale solution by the proposed method with different $f_0$ and $\delta$ when $H = \sqrt{2}/20$, $J = 5$, and $\ell = 3$. Tables \ref{tab:pe_exp2_cfdelta} and \ref{tab:ve_exp2_cfdelta} present the corresponding numerical results. We remark that the for high frequency cases with larger $f_0$, one may use a finer coarse mesh to enhance the accuracy of the multiscale approximation. 

\begin{table}[h!]
\centering
	\begin{tabular}{c|c|c|c||c|c|c|c}
	$J$ & $\ell$ & $(f_0, \delta)$ & $H$ & $t = 0.1$ & $t = 0.2$ & $t = 0.3$ & $t = 0.4$ \\ 
	\hline
	$5$& $3$ & $(20,0.02)$   & $\sqrt{2}/20$ & $7.3507\%$ & $13.3181\%$ & $24.2448\%$ & $33.1721\%$ \\ 
	\hline	
	$5$& $3$ & $(20,0.005)$ & $\sqrt{2}/20$ & $8.9192\%$ & $16.8415\%$ & $29.8360\%$ & $39.8137\%$ \\
	\hline	
	$5$& $3$ & $(50,0.02)$   & $\sqrt{2}/20$ & $69.5498\%$ & $92.0786\%$ & $101.8084\%$ & $107.6769\%$ \\ 
	\hline	
	$5$& $3$ & $(50,0.005)$ & $\sqrt{2}/20$ & $43.1308\%$ & $68.3501\%$ & $83.9901\%$ & $89.6999\%$ \\ 
	\end{tabular}
	\caption{$e_{\text{pre}}$ in Example \ref{exp:mw2} with varying $f_0$ and $\delta$.}
	\label{tab:pe_exp2_cfdelta}
\end{table}

\begin{table}[h!]
\centering
	\begin{tabular}{c|c|c|c||c|c|c|c}
	$J$ & $\ell$ & $(f_0, \delta)$ & $H$ & $t = 0.1$ & $t = 0.2$ & $t = 0.3$ & $t = 0.4$ \\ 
	\hline
	$5$& $3$ & $(20,0.02)$   & $\sqrt{2}/20$ & $8.8847\%$ & $10.3469\%$ & $25.1502\%$ & $33.1973\%$ \\ 
	\hline
	$5$& $3$ & $(20,0.005)$ & $\sqrt{2}/20$ & $49.8404\%$ & $13.8201\%$ & $30.8732\%$ & $40.2296\%$ \\ 
	\hline
	$5$& $3$ & $(50,0.02)$   & $\sqrt{2}/20$ & $66.3617\%$ & $92.1064\%$ & $99.8070\%$ & $109.3254\%$ \\ 
	\hline
	$5$& $3$ & $(50,0.005)$ & $\sqrt{2}/20$ & $38.7407\%$ & $68.9757\%$ & $79.5035\%$ & $92.6214\%$ \\ 
	\end{tabular}
	\caption{$e_{\text{vel}}$ in Example \ref{exp:mw2} with varying $f_0$ and $\delta$.}
	\label{tab:ve_exp2_cfdelta}
\end{table}

\end{example}

\section{Conclusion}\label{sec:conclusion}
In this work, we have proposed and analyzed the constraint energy minimizing generalized multiscale finite element method for solving the wave equation in mixed formulation. The multiscale basis functions for pressure are obtained by solving a class of well-designed local spectral problems. Based on the concept of constraint energy minimization, we construct the multiscale basis functions for velocity satisfying the property of least energy. The method is shown to have first-order convergence with respect to the coarse mesh size. Numerical results are provided to illustrate the efficiency of the proposed method. 

\subsection*{Acknowledgement}
Eric Chung's work is partially supported by Hong Kong RGC General Research Fund (Projects 14304217 and 14302018) and CUHK Direct Grant for Research 2018-19.

\bibliographystyle{abbrv}
\bibliography{references}
\end{document}